\newcommand{\1}{\mathbf {1}}
\newcommand{\Z}{{\mathbb Z}}
\newcommand{\C}{{\mathbb C}}
\newcommand{\g}{{\mathfrak g}}
\newcommand{\h}{{\mathfrak h}}
\newcommand{\wh}{{\widehat{\mathfrak h}}}
\newcommand{\mraff}{\mathrm{aff}}
\def\<{\langle}
\def\>{\rangle}
\def\a{\alpha}
\newcommand{\la}{\langle}
\newcommand{\ra}{\rangle}
\newtheorem{thm}{Theorem}[section]
\newtheorem{prop}[thm]{Proposition}
\newtheorem{lem}[thm]{Lemma}
\newtheorem{rmk}[thm]{Remark}
\newtheorem{definition}[thm]{Definition}
\begin{document}

\begin{center}
{\Large \bf  Representations of the orbifold of parafermion vertex operator algebra $K(osp(1|2),k)$}

\end{center}

\begin{center}
{ Cuipo Jiang$^{a}$\footnote{Supported by China NSF grants No.12171312.}
and Qing Wang$^{b}$\footnote{Supported by
China NSF grants No.12071385 and No.12161141001.}\\
$\mbox{}^{a}$ School of Mathematical Sciences, Shanghai Jiao Tong University, Shanghai 200240, China\\
\vspace{.1cm}
$\mbox{}^{b}$ School of Mathematical Sciences, Xiamen University,
Xiamen 361005, China\\
}
\end{center}

\begin{abstract}
This paper is about the orbifold theory of parafermion vertex operator algebras $K(osp(1|2),k)$ associated to the affine vertex operator superalgebra $L_{\widehat{osp(1|2)}}(k,0)$ with any positive integer $k$. Among the main results, we classify the irreducible modules for the orbifold of the parafermion vertex operator algebra $K(osp(1|2),k)$.

\end{abstract}

\section{Introduction}
\def\theequation{1.\arabic{equation}}
\setcounter{equation}{0}

This paper is a continuation in a series of papers on the study of the orbifold theory of coset vertex operator algebra $K(\mathfrak{g},k)$ called parafermion vertex operator algebra with $\g$ a finite dimensional simple Lie superalgebra. It was proved that $L_{\hat{\g}}(k,0)$ is $C_2$-cofinite and rational if and only if $\g$ is either a simple Lie algebra or $\g=osp(1|2n)$, and $k$ is a positive integer \cite{FZ}, \cite{DL}, \cite{Li1}, \cite{AL}, \cite{CL}. The commutant $K(osp(1|2n),k)$ of a Heisenberg vertex operator subalgebra in the simple affine vertex operator superalgebra $L_{\hat{\mathfrak{g}}}(k,0)$ with $k$ being a positive integer attract more attention because they are interesting examples of rational vertex operator superalgebras and they may relate to certain important $W$-superalgebras. The structure of the parafermion vertex operator algebra $K(osp(1|2n),k)$ has been studied in \cite{JW3}, we determine the generators of $K(osp(1|2n),k)$, it shows that $K(sl_2,k)$ and $K(osp(1|2),k)$ are building block of  $K(osp(1|2n),k)$ as the role of $sl_2$ played in Kac-Moody Lie algebras. So it is important to understand the representation theories of $K(sl_2,k)$ and $K(osp(1|2),k)$ first for the further study of the representation theory of the rational vertex operator algebra $K(osp(1|2n),k)$. The representation theories for $K(sl_2,k)$ and the orbifold of $K(sl_2,k)$ with $k$ being a positive integer were studied in \cite{DLY2}, \cite {DLWY}, \cite{DW1}, \cite{DW2}, \cite{DW3}, \cite{Lam}, \cite{ALY1}, \cite{ALY2}, \cite{JW1}, \cite{JW2} etc. Note that $K(osp(1|2),k)$ with $k$ being a positive integer is an extension of $C_2$-cofinite and rational vertex operator algebra $K(sl_2,k)\otimes L^{Vir}(c_{2k+3,k+2},0)$, where $L^{Vir}(c_{2k+3,k+2},0)$ is the minimal Virasoro vertex operator algebra with central charge $c_{2k+3,k+2}=1-\frac{6(k+1)^{2}}{(2k+3)(k+2)}$. From \cite{HKL}, we see that $K(osp(1|2),k)$ with positive integers $k$ are $C_2$-cofinite and rational. Also the representation theory for $K(osp(1|2),k)$ has been studied in \cite{CFK}. Next it is natural to consider the orbifold theory of the parafermion vertex operator algebra $K(osp(1|2),k)$ with $k$ being a positive integer. We proved in \cite{JW4} that the automorphism group of the parafermion vertex operator algebra $K(osp(1|2),k)$ with $k\geq 3$ is of order 2 and generated by $\sigma$ which is determined by $\sigma(h)=-h, \ \sigma(e)=f,\ \sigma(f)=e,\ \sigma(x+y)=-\sqrt{-1}(x+y),\ \sigma(x-y)=\sqrt{-1}(x-y)$, where $\{e,f,h,x,y\}$ is the basis of the Lie superalgebra $osp(1|2)$ with
$[e,f]=h,$ $[h,e]=2e$, $[h,f]=-2f$
$[h,x]=x$, $[f,x]=-y$,
$[h,y]=-y$,  $[e,y]=-x$,
$\{x,x\}=2e,$ $\{x,y\}=h$, $\{y,y\}=-2f.$ We denote the simple affine vertex operator superalgebra $L_{\widehat{osp(1|2)}}(k,0)$ by $L_{\hat{\g}}(k,0)$ in this paper. The orbifold theory of simple affine vertex operator superalgebra $L_{\hat{\g}}(k,0)$ has been studied in \cite{JW4}. The irreducible modules have been classified and the fusion rules have been determined therein.

From \cite{M2}, \cite{CM} (also see \cite{CKLR}), we know that $K(osp(1|2),k)^{\sigma}$ with positive integers $k\geq 3$ are $C_2$-cofinite and rational. In this paper, we classify the irreducible modules of $K(osp(1|2),k)^{\sigma}$. It was proved in \cite{CL} (see the different proof in \cite{Dra}) that the commutant Com$(L_{\hat{sl_2}}(k,0), L_{\hat{\g}}(k,0))=L^{Vir}(c_{2k+3,k+2},0),$ and Com$(L^{Vir}(c_{2k+3,k+2},0), L_{\hat{\g}}(k,0))=L_{\hat{sl_2}}(k,0).$ We also have
Com$(V_{\mathbb{Z}\gamma},L_{\hat{sl_2}}(k,0))=K(sl_2,k)$ and Com$(K(sl_2,k),L_{\hat{sl_2}}(k,0))=V_{\mathbb{Z}\gamma}$ \cite{DLY2}. From the decomposition of the affine vertex operator superalgebra $L_{\hat{\g}}(k,0)$ as the $L_{\hat{sl_2}}(k,0)\otimes L^{Vir}(c_{2k+3,k+2},0)$-module \cite{CFK}, we decompose $L_{\hat{sl_2}}(k,0)^{\sigma}$ as $V_{\mathbb{Z}\gamma}^{+}\otimes K(sl_2,k)^{\sigma}$-modules. Since $V_{\mathbb{Z}\gamma}^{+}\otimes K(\g,k)^{\sigma}$ are conformal embedding in $L_{\hat{\g}}(k,0)^{\sigma}$, and $V_{\mathbb{Z}\gamma}^{+}\otimes K(sl_2,k)^{\sigma}$ are conformal embedding in $L_{\hat{sl_2}}(k,0)^{\sigma}$, together with Com$(V_{\mathbb{Z}\gamma}^{+},L_{\hat{\g}}(k,0)^{\sigma})=K(osp(1|2),k)^{\sigma}$ and Com$(K(osp(1|2),k)^{\sigma},L_{\hat{\g}}(k,0)^{\sigma})=V_{\mathbb{Z}\gamma}^{+},$ we can obtain all the irreducible modules of
$K(\g,k)^{\sigma}$ from the decomposition of the irreducible modules of $L_{\hat{\g}}(k,0)^{\sigma}$ as $V_{\mathbb{Z}\gamma}^{+}\otimes K(\g,k)^{\sigma}$-modules. We point out that all the irreducible modules of $L_{\hat{\g}}(k,0)^{\sigma}$ have been classified in \cite{JW4}. The difficulty is how to distinguish the inequivalent irreducible modules of $K(\g,k)^{\sigma}$ among these irreducible modules of $K(\g,k)^{\sigma}$ we get from the decomposition. We clarify this in the proof of Theorem \ref{thm:irr para}.

The paper is organized as follows. In Section 2, we recall some results about the affine vertex operator superalgebra $L_{\hat{\g}}(k,0)$ and its parafermion vertex operator subalgebra $K(osp(1|2),k)$ and decompose all the irreducible modules of $L_{\hat{sl_2}}(k,0)^{\sigma}$ as $V_{\mathbb{Z}\gamma}^{+}\otimes K(sl_2,k)^{\sigma}$-modules. In Section 3, we classify the irreducible modules of the orbifold $K(\g,k)^{\sigma}$ and prove that these irreducible modules of the orbifold $K(\g,k)^{\sigma}$ are self-dual.

\section{Preliminaries}
\label{Sect:V(k,0)}\def\theequation{2.\arabic{equation}}

In this section, we recall from \cite{JW2}, \cite{JW3} some basic results on the affine vertex operator superalgebra and parafermion vertex
operator algebra of $osp(1|2)$ at level $k$ with $k$ being a positive integer.

Let $\g$ be the finite dimensional simple Lie superalgebra $osp(1|2)$ with a Cartan
subalgebra $\h.$ Let $\{e,f,h,x,y\}$ be the basis of the Lie superalgebra $\g$ with the anti-commutation relations:
$$[e,f]=h,\; [h,e]=2e, \; [h,f]=-2f$$
$$[h,x]=x, \; [e,x]=0, \; [f,x]=-y$$
$$[h,y]=-y, \; [e,y]=-x, \; [f,y]=0$$
$$\{x,x\}=2e, \; \{x,y\}=h, \; \{y,y\}=-2f.$$ Let  $\la ,\ra$ be an invariant even supersymmetric
nondegenerate bilinear form on $\g$ such that $\<\a,\a\>=2$ if
$\alpha$ is a long root in the root system of even, where we have identified $\h$ with $\h^*$
via $\<,\>.$  Let $\hat{\mathfrak g}= \g \otimes \C[t,t^{-1}] \oplus \C K$
be the corresponding affine Lie superalgebra. Let $k$ be a positive
integer and
\begin{equation*}
 V_{\hat{\g}}(k,0) = Ind_{\g \otimes
\C[t]\oplus \C K}^{\hat{\g}}\C
\end{equation*}
the induced $\hat{\g}$-module such that ${\g} \otimes \C[t]$ acts as $0$ and $K$ acts as $k$ on $\1=1$.

We denote by $a(n)$ the operator on $V_{\hat{\g}}(k,0)$ corresponding to the action of
$a \otimes t^n$. Then
$$[a(m), b(n)] = [a,b](m+n) + m \la a,b \ra \delta_{m+n,0}k$$
for $a, b \in \g$ and $m,n\in \Z$.

Let $a(z) = \sum_{n \in \Z} a(n)z^{-n-1}$. Then $V_{\hat{\g}}(k,0)$ is a
vertex operator superalgebra generated by $a(-1)\1$ for $a\in \g$ such
that $Y(a(-1)\1,z) = a(z)$ with the
vacuum vector $\1$ and the Virasoro vector
\begin{align*}
\omega_{\mraff} &= \frac{1}{2(k+\frac{3}{2})} \Big(
\frac{1}{2}h(-1)h(-1)\1 +e(-1)f(-1)\1+f(-1)e(-1)\1
\\&-
\frac{1}{2}x(-1)y(-1)\1+
\frac{1}{2}y(-1)x(-1)\1
\Big)
\end{align*}
of central charge $\frac{2k}{2k+3}$(see \cite{KRW}).

Let $M(k)$ be the vertex operator subalgebra of $V_{\hat{\g}}(k,0)$
generated by $h(-1)\1$ with the Virasoro
element
$$\omega_{\gamma} = \frac{1}{4k}
h(-1)^{2}\1$$
of central charge $1$.


The vertex operator superalgebra $V_{\hat{\g}}(k,0)$ has a unique maximal ideal $\mathcal{J}$, which is generated by a weight $k+1$ vector $e(-1)^{k+1}\1$ \cite{GS}. The quotient algebra $L_{\hat{\g}}(k,0)=V_{\hat{\g}}(k,0)/\mathcal{J}$ is a
simple, rational and $C_2$-cofinite vertex operator superalgebra \cite{AL}, \cite{CL}. Moreover, the image of $M(k)$
in $L(k,0)$ is isomorphic to $M(k)$ and will be
denoted by $M(k)$ again. Set
\begin{equation*}
 K(\g,k)=\{v \in L_{\hat{g}}(k,0)\,|\, h(m)v =0
\text{ for }\; h\in {\mathfrak h},
 m \ge 0\}.
\end{equation*}
Then $K(\g,k)$, which is the space of highest weight vectors
with highest weight $0$ for $\wh$,
is the commutant of $M(k)$ in $L_{\hat{\g}}(k,0)$. Note that $K(\g,k)$ can be also viewed as the commutant of the lattice vertex operator algebra $V_{\mathbb{Z}\gamma}$ with $\langle\gamma,\gamma\rangle=2k$ (see \cite{DLY2}). We proved in \cite{JW3} that $K(\g,k)$ is a simple vertex operator algebra which is called the parafermion vertex operator algebra and generated by

\begin{equation}\label{eq:w3}
\begin{split}
\omega
=\frac{1}{2k(k+2)}(-h(-1)^{2}{\1}
+2ke(-1)f(-1){\1}-kh(-2){\1}),
\end{split}
\end{equation}

\begin{equation}\label{eq:w3'}
\begin{split}
\bar{\omega}
=-h(-1)^{2}{\1}
+4kx(-1)y(-1){\1}-2kh(-2){\1},
\end{split}
\end{equation}

\begin{equation}\label{eq:W3'}
\begin{split}
W^3 &= k^2 h(-3){\1} + 3 k
h(-2)h(-1){\1} + 2h(-1)^3{\1}
-6k h(-1)e(-1)f(-1){\1}
\\
&+3k^2e(-2)f(-1){\1}
-3k^2e(-1)f(-2){\1},
\end{split}
\end{equation}

\begin{equation}\label{eq:W3'''}
\begin{split}
\bar{W}^3 &= 2k^2 h(-3){\1} + 3 k
h(-2)h(-1){\1} + h(-1)^3{\1}
-6k h(-1)x(-1)y(-1){\1}\\&
+6k^2x(-2)y(-1){\1}-6k^2x(-1)y(-2){\1}.\end{split}
\end{equation}

 Let $L_{\hat{sl_2}}(k,0)$ be the simple affine vertex operator algebra associated to $\hat{sl_2}$ and let $L(k,i)$ for $0\leq i\leq k$ be the irreducible modules for the rational vertex operator algebra $L_{\hat{sl_2}}(k,0)$ with the top level $U^{i}=\bigoplus_{j=0}^{i}\mathbb{C}v^{i,j}$ which is an $(i+1)$-dimensional irreducible module of the simple Lie algebra $\C h(0)\oplus\C e(0)\oplus \C f(0)\cong sl_2$.

Let $\sigma$ be an automorphism of the Lie superalgebra $osp(1|2)$ defined by $\sigma(h)=-h, \ \sigma(e)=f,\ \sigma(f)=e,\ \sigma(x+y)=-\sqrt{-1}(x+y),\ \sigma(x-y)=\sqrt{-1}(x-y)$. $\sigma$ can be lifted to an automorphism $\sigma$ of the vertex operator superalgebra $V_{\hat{\g}}(k,0)$ of order 4 in the following way:
$$\sigma(a_{1}(-n_{1})\cdots a_{s}(-n_{s})\1)=\sigma(a_{1})(-n_{1})\cdots \sigma(a_{s})(-n_{s})\1$$
for $a_{i}\in osp(1|2)$ and $n_{i}>0$. Then $\sigma$ induces an automorphism of $L_{\hat{\g}}(k,0)$ as $\sigma$ preserves the unique maximal ideal $\mathcal{J}$, and the Virasoro element $\omega_{\gamma}$ is invariant under $\sigma$. Thus $\sigma$ induces an automorphism of the parafermion vertex operator algebra $K(\g,k)$. In fact, $\sigma(\omega)=\omega,\ \sigma(\bar{\omega})=\bar{\omega}, \sigma(W^{3})=-W^{3},\ \sigma(\bar{W^{3}})=-\bar{W^{3}}$. We note that $\sigma(h)=-h, \ \sigma(e)=f,\ \sigma(f)=e$ induces an autmorphism of the  parafermion vertex operator algebra $K(sl_2,k)$, the irreducible modules of the orbifold vertex operator algebra $K(sl_2,k)^{\sigma}$ are classified in \cite{JW1} and fusion rules are determined in \cite{JW2}.

Let $V_{{\mathbb{Z}}\gamma}$ be the vertex operator algebra associated with a rank one lattice $\mathbb{Z}\gamma$ and $\langle\gamma,\gamma\rangle=2k$. It is known that $V_{{\mathbb{Z}}\gamma}$ is generated by $e^{\gamma}=\frac{1}{k!}e(-1)^{k}\1$ and $e^{-\gamma}=\frac{1}{k!}f(-1)^{k}\1$. $V_{{\mathbb{Z}}\gamma}$ has an order $2$ automorphism induced from $\theta(\gamma)=-\gamma$, and the $\theta$-invariants $V_{{\mathbb{Z}}\gamma}^{+}$ is a simple vertex operator subalgebra of $V_{{\mathbb{Z}}\gamma}$ and $V_{{\mathbb{Z}}\gamma}^{+}$ is generated by $e^{\gamma}+e^{-\gamma}$ \cite{DN}. The irreducible modules of $V_{{\mathbb{Z}}\gamma}^{+}$ are classified in \cite{DN}.
\begin{prop}\cite{DN}\label{pro:orbifoldlattice} All the irreducible modules of $V_{{\mathbb{Z}}\gamma}^{+}$ are:

\begin{eqnarray*}V_{\mathbb{Z}\gamma}^{+},\ V_{\mathbb{Z}\gamma}^{-}, \ V_{\mathbb{Z}\gamma+\frac{i}{2k}} \ (1\leq i \leq k-1),\ V_{\mathbb{Z}\gamma+\frac{1}{2}\gamma}^{+},\ V_{\mathbb{Z}\gamma+\frac{1}{2}\gamma}^{-},\end{eqnarray*}
\begin{eqnarray*}V_{\mathbb{Z}\gamma}^{T_{1},+},\ V_{\mathbb{Z}\gamma}^{T_{1},-}, \ V_{\mathbb{Z}\gamma}^{T_{2},+},\ V_{\mathbb{Z}\gamma}^{T_{2},-}.\end{eqnarray*}
\end{prop}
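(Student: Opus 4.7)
The plan is to classify irreducible $V_{\mathbb{Z}\gamma}^{+}$-modules by first enumerating the irreducible $\sigma$-twisted $V_{\mathbb{Z}\gamma}$-modules for each $\sigma\in\langle\theta\rangle$, and then applying the standard orbifold bookkeeping: by the Dong--Li--Mason / Miyamoto theory for cyclic orbifolds, every irreducible $V_{\mathbb{Z}\gamma}^{+}$-module is obtained either as one of the $\hat{\theta}$-eigenspaces of a $\theta$-stable irreducible $V_{\mathbb{Z}\gamma}$- or $\theta$-twisted $V_{\mathbb{Z}\gamma}$-module, or as the restriction of an irreducible module lying in a non-trivial $\theta$-orbit (in which case the two modules in the orbit restrict to the same irreducible). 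Thus the task reduces to listing the modules in the untwisted and twisted sectors and determining their $\theta$-orbits.

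For the untwisted sector, Dong's classification gives that the irreducible $V_{\mathbb{Z}\gamma}$-modules are $V_{\mathbb{Z}\gamma+\frac{i}{2k}\gamma}$ for $i=0,1,\dots,2k-1$, since $(\mathbb{Z}\gamma)^{*}/\mathbb{Z}\gamma\cong\frac{1}{2k}\mathbb{Z}\gamma/\mathbb{Z}\gamma$. The lift $\hat\theta$ sends $V_{\mathbb{Z}\gamma+\frac{i}{2k}\gamma}$ to $V_{\mathbb{Z}\gamma-\frac{i}{2k}\gamma}$, so the module is $\theta$-stable precisely when $\frac{i}{k}\in\mathbb{Z}$, namely for $i=0$ and $i=k$. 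The two fixed modules split as $V_{\mathbb{Z}\gamma}=V_{\mathbb{Z}\gamma}^{+}\oplus V_{\mathbb{Z}\gamma}^{-}$ and $V_{\mathbb{Z}\gamma+\frac{1}{2}\gamma}=V_{\mathbb{Z}\gamma+\frac{1}{2}\gamma}^{+}\oplus V_{\mathbb{Z}\gamma+\frac{1}{2}\gamma}^{-}$, yielding four $V_{\mathbb{Z}\gamma}^{+}$-modules; the remaining cosets are paired as $\{i,2k-i\}$ for $1\leq i\leq k-1$, and each pair contributes a single irreducible $V_{\mathbb{Z}\gamma}^{+}$-module, denoted $V_{\mathbb{Z}\gamma+\frac{i}{2k}\gamma}$.

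For the twisted sector, one invokes Lepowsky's construction of $\theta$-twisted modules for a lattice vertex operator algebra: the irreducible $\theta$-twisted $V_{\mathbb{Z}\gamma}$-modules are in bijection with irreducible representations of a central extension of $\mathbb{Z}\gamma/2\mathbb{Z}\gamma\cong\mathbb{Z}/2\mathbb{Z}$ on which the center acts by $-1$. A direct computation of this twisted group algebra gives exactly two irreducible representations, producing the two irreducible twisted modules $V_{\mathbb{Z}\gamma}^{T_{1}}$ and $V_{\mathbb{Z}\gamma}^{T_{2}}$. Each is $\theta$-stable (being uniquely determined up to isomorphism by its central character), and splitting under $\hat\theta$ yields four irreducible $V_{\mathbb{Z}\gamma}^{+}$-modules $V_{\mathbb{Z}\gamma}^{T_{j},\pm}$, $j=1,2$.

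The main obstacle is establishing completeness: that the $2+(k-1)+2+4=k+7$ modules above really exhaust the isomorphism classes. One direction is easy: the irreducibility of each $\hat\theta$-eigenspace follows from the standard fact that an order-two automorphism of an irreducible module yields irreducible isotypic components, and pairwise non-isomorphism follows from distinct lowest conformal weights or from the distinction between untwisted and twisted character. The other direction, that no further irreducibles exist, requires the quantum-Galois correspondence for cyclic orbifolds \cite{DN}, which guarantees that the induction-restriction functors between $V_{\mathbb{Z}\gamma}$-modules (untwisted and $\theta$-twisted) and $V_{\mathbb{Z}\gamma}^{+}$-modules are exhaustive; this is the step one has to invoke as the substantive external input rather than verify by hand.
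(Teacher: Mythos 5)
This proposition is not proved in the paper at all: it is imported verbatim from Dong--Nagatomo \cite{DN}, so there is no in-paper argument to match your proposal against. That said, your sketch is a correct outline of how the classification is organized, and your bookkeeping checks out: $2k$ untwisted irreducibles $V_{\mathbb{Z}\gamma+\frac{i}{2k}\gamma}$ with exactly $i=0,k$ fixed by $\theta$ (giving $4$ split modules plus $k-1$ identified pairs), two irreducible $\theta$-twisted modules from the rank-one twisted group algebra, each $\theta$-stable and splitting into $\pm$ eigenspaces, for a total of $k+7$, which agrees with \cite{DN}. The genuine difference from the cited source is in how completeness and inequivalence are established. Dong and Nagatomo do not invoke a general orbifold completeness theorem (which did not exist at the time); they compute the Zhu algebra $A(V_{\mathbb{Z}\gamma}^{+})$ directly and read off both the exhaustive list and the pairwise non-isomorphism from it. Your route instead appeals to the modern cyclic-orbifold machinery (rationality and $C_2$-cofiniteness of $V_{\mathbb{Z}\gamma}^{+}$ plus the theorem that every irreducible $V^{\langle\theta\rangle}$-module occurs inside an irreducible $\theta^{j}$-twisted $V$-module); that is a legitimate and arguably cleaner argument today, but it is mis-attributed --- the completeness input is due to the later orbifold literature (Miyamoto, Carnahan--Miyamoto, Dong--Ren--Xu, McRae), not to \cite{DN}. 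Two small points to tighten: the pairwise inequivalence cannot be settled by lowest conformal weights alone, since $V_{\mathbb{Z}\gamma}^{T_{1},\pm}$ and $V_{\mathbb{Z}\gamma}^{T_{2},\pm}$ share the weights $\tfrac{1}{16}$ and $\tfrac{9}{16}$ (one must compare the action of the generator $e^{\gamma}+e^{-\gamma}$ on top levels, or equivalently the $A(V_{\mathbb{Z}\gamma}^{+})$-module structure); and the subscript $V_{\mathbb{Z}\gamma+\frac{i}{2k}}$ in the statement should be read as $V_{\mathbb{Z}\gamma+\frac{i}{2k}\gamma}$, as you correctly did.
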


 Let $L^{Vir}(c_{p,q},0)$ be the minimal Virasoro vertex operator algebra with central charge $c_{p,q}=1-\frac{6(p-q)^{2}}{pq}$, $p,q\in\mathbb{Z}_{\geq 2}, (p,q)=1.$  It is known that \cite{W} $L^{Vir}(c_{p,q},0)$ is rational and its irreducible modules are $\{L^{Vir}(c_{p,q},h_{p,q}^{r,s})|h_{p,q}^{r,s}\in S\}$, where

$$S=\{h_{p,q}^{r,s}=\frac{(sq-rp)^{2}-(p-q)^{2}}{4pq}|1\leq r\leq q-1, 1\leq s\leq p-1\}.$$ We denote these irreducible modules $L^{Vir}(c_{p,q},h_{p,q}^{r,s})$ by  $V_{r,s}$ for $1\leq r\leq q-1, 1\leq s\leq p-1$.

 The following theorem gives the classification of the irreducible modules of the orbifold vertex operator algebra $K(sl_2,k)^{\sigma}$ for $k\geq 3$ \cite{JW1}. For simplicity, we denote $K(sl_2,k)^{\sigma}$ by $K^{+}$ in this paper.

 \begin{thm}\cite{JW1}\label{thm:orbifold3'}
If $k=2n+1$, $n\geq 1$, there are $\frac{(k+1)(k+7)}{4}$ inequivalent irreducible modules of $K^{+}$.
 If $k=2n$, $n\geq 2$, there are $\frac{(k^{2}+8k+28)}{4}$ inequivalent irreducible modules of $K^{+}$. More precisely, if $k=2n+1$, $n\geq 1$, the set
 \begin{eqnarray*}
&& \{ W(k,i)^{j} \ \mbox{for} \  0\leq i\leq \frac{k-1}{2}, j=1,2,\\
&&(M^{i,j})^{s} \ \mbox{for} \  (i,j)=(i,\frac{i}{2}), i=2,4,6,\cdots,2n, \ \mbox{and} \ (i,j)=(2n+1,0), s=0,1,\\ && M^{i,0} \ \mbox{for} \ 1\leq i\leq \frac{k-1}{2},
  M^{i,j} \ \mbox{for} \ 3\leq i\leq k, \mbox{if} \  i=2m, 1\leq j\leq m-1, \mbox{if} \ i=2m+1, 1\leq j\leq m\} \\
 \end{eqnarray*} gives all inequivalent irreducible $K^{+}$-modules. If $k=2n$, $n\geq 2$, the set
 \begin{eqnarray*}
&& \{ W(k,i)^{j} \ \mbox{for} \  0\leq i\leq \frac{k}{2}, j=1,2,  \widetilde{W(k,\frac{k}{2})}^{j} \ \mbox{for} \ j=1,2,\\
&&(M^{i,j})^{s} \ \mbox{for} \  (i,j)=(i,\frac{i}{2}), i=2,4,6,\cdots,2n, (i,j)=(n,0) \mbox{and} \ (i,j)=(2n,0), s=0,1,\\ && M^{i,0} \ \mbox{for} \ 1\leq i\leq \frac{k-2}{2},
  M^{i,j} \ \mbox{for} \ 3\leq i\leq k, \mbox{if} \  i=2m, 1\leq j\leq m-1, \mbox{if} \ i=2m+1, 1\leq j\leq m\} \\
 \end{eqnarray*} gives all inequivalent irreducible $K^{+}$-modules.
\end{thm}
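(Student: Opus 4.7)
The plan is to deploy orbifold theory for rational $C_{2}$-cofinite vertex operator algebras under an order-two automorphism. By the general framework (Dong--Li--Mason, Miyamoto--Tanabe, Dong--Ren--Yamskulna), every irreducible $K^{+}$-module arises as an eigenspace summand of either an untwisted or a $\sigma$-twisted irreducible $K(sl_{2},k)$-module. A $\sigma$-stable irreducible contributes two inequivalent $K^{+}$-modules (its $\pm 1$ eigenspaces), while an orbit $\{M,\sigma\cdot M\}$ of length two restricts to a single irreducible $K^{+}$-module. Thus the classification reduces to (a) enumerating the irreducible $K(sl_{2},k)$-modules together with the $\sigma$-action on them, and (b) enumerating the irreducible $\sigma$-twisted $K(sl_{2},k)$-modules.

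For (a), I would use the parametrization $\{M^{i,j}\}$ from \cite{DLY2}, which arises from the decomposition of $L(k,i)$ as a $K(sl_{2},k)\otimes V_{\mathbb{Z}\gamma}$-module. Since $\sigma$ acts on $L_{\hat{sl_2}}(k,0)$ by $h\mapsto -h$, $e\leftrightarrow f$, it restricts to the lattice involution $\theta\colon\gamma\mapsto -\gamma$ on $V_{\mathbb{Z}\gamma}$; comparing both sides of the decomposition gives $\sigma(M^{i,j})\simeq M^{i,i-j}$. Hence the \emph{middle} modules $M^{i,i/2}$ are $\sigma$-stable and split as $(M^{i,i/2})^{0}\oplus(M^{i,i/2})^{1}$, while the pairs $\{M^{i,j},M^{i,i-j}\}$ with $j\neq i/2$ descend to single $K^{+}$-modules $M^{i,j}$. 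For (b), I would realize $\sigma$-twisted $K(sl_{2},k)$-modules via the commutant $\mathrm{Com}(V_{\mathbb{Z}\gamma},L_{\hat{sl_2}}(k,0))=K(sl_{2},k)$: each irreducible $\sigma$-twisted $L_{\hat{sl_2}}(k,0)$-module decomposes as a $V_{\mathbb{Z}\gamma}^{+}\otimes K^{+}$-module, with $V_{\mathbb{Z}\gamma}^{+}$-factors drawn from $V_{\mathbb{Z}\gamma}^{T_{r},\pm}$ of Proposition \ref{pro:orbifoldlattice}, and the multiplicity spaces give the desired $W(k,i)^{j}$.

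The list is then assembled and the count verified by direct enumeration, treating $k$ odd and $k$ even separately. For odd $k=2n+1$ the middle index $j=i/2$ exists only for even $i$ (with a boundary contribution at $i=k$), giving $(k+1)(k+7)/4$ irreducibles. For even $k=2n$ an additional middle index appears at $i=n$; furthermore the self-conjugate lattice piece $V_{\mathbb{Z}\gamma+\frac{1}{2}\gamma}^{\pm}$ is $\sigma$-stable, contributing the extra modules $\widetilde{W(k,k/2)}^{j}$ and yielding $(k^{2}+8k+28)/4$. Completeness would be checked by comparing with an upper bound for $|\mathrm{Irr}(K^{+})|$ obtained from the conformal embedding $V_{\mathbb{Z}\gamma}^{+}\otimes K^{+}\hookrightarrow L_{\hat{sl_2}}(k,0)^{\sigma}$ together with the known irreducible $L_{\hat{sl_2}}(k,0)^{\sigma}$-modules.

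The main obstacle is the explicit analysis of $\sigma$-eigenvalues on the top components of the $\sigma$-twisted modules, which is what controls whether a twisted module splits or remains irreducible on restriction to $K^{+}$, and in particular is what separates the modules $\widetilde{W(k,k/2)}^{j}$ from the $W(k,k/2)^{j}$ in the even-$k$ case. Getting the signs uniformly right and matching them against the global count from the conformal embedding is where most of the technical work concentrates.
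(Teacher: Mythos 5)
This theorem is only quoted here from \cite{JW1}; the paper gives no proof of it, so the comparison is with the argument of \cite{JW1} and with the paper's own analogous proof of Theorem \ref{thm:irr para} one level up. Your outline is essentially that argument: decompose $L(k,i)$ (resp.\ the $\theta$-twisted modules $\overline{L(k,i)}$) over $V_{\mathbb{Z}\gamma}^{+}\otimes K^{+}$ using $L(k,i)=\bigoplus_{j}V_{\mathbb{Z}\gamma+(i-2j)\gamma/2k}\otimes M^{i,j}$ and the Dong--Nagatomo classification of $V_{\mathbb{Z}\gamma}^{+}$-modules, identify $\sigma(M^{i,j})\cong M^{i,i-j}$ so that only the middle modules split, and close the count against the known irreducibles of $L_{\hat{sl_2}}(k,0)^{\sigma}$; your module counts for both parities of $k$ check out against the stated lists. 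One imprecision: the extra modules $\widetilde{W(k,\frac{k}{2})}^{j}$ for even $k$ are \emph{twisted-type} and arise because $\overline{L(k,\frac{k}{2})}^{\pm}$ each split into four summands over $V_{\mathbb{Z}\gamma}^{T_r,\pm}\otimes K^{+}$ (Lemma \ref{twisted}(3),(4)), not from the $\sigma$-stability of the untwisted half-lattice coset $V_{\mathbb{Z}\gamma+\frac{1}{2}\gamma}^{\pm}$, which instead accounts for the additional untwisted fixed points $(M^{k/2,0})^{\pm}$, $(M^{k,0})^{\pm}$ in the even-$k$ list.
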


\begin{rmk}\label{rmk:orbifold3'} With the notations in  Theorem \ref{thm:orbifold3'}, we call $W(k,i)^{j}$ and $\widetilde{W(k,\frac{k}{2})}^{j}$ twisted type modules and  $(M^{i,j})^{s}, M^{i,j}$ untwisted modules of type $I$ and type $II$ respectively.

To emphasize the action of the automorphism $\sigma$, we denote twisted type modules $W(k,i)^{1}$ by $W(k,i)^{+}$ and $W(k,i)^{2}$ by $W(k,i)^{-}$, and we denote $\widetilde{W(k,\frac{k}{2})}^{1}$ by $\widetilde{W(k,\frac{k}{2})}^{+}$ and  $\widetilde{W(k,\frac{k}{2})}^{2}$ by $\widetilde{W(k,\frac{k}{2})}^{-}$. We denote untwisted modules $(M^{i,j})^{0}$ of type $I$ by $(M^{i,j})^{+}$ and  $(M^{i,j})^{1}$ by $(M^{i,j})^{-}$.

\end{rmk}

We need the following lemmas for later use.

\begin{lem}\label{decomp.} Let $k$ be a positive integer and $0\leq i\leq k$, we have

(1) If $i\in 2\mathbb{Z}, k\in 2\mathbb{Z}$,
\begin{eqnarray*}
	\begin{split}L(k,i)^{+} &= (\oplus_{0\leq j\leq \frac{i}{2}-1, (i,j)\neq(k,0)}V_{\mathbb{Z}\gamma+\frac{i-2j}{2k}\gamma}\otimes M^{i,j})\oplus V_{\mathbb{Z}\gamma}^{+}\otimes (M^{i,\frac{i}{2}})^{+}\oplus V_{\mathbb{Z}\gamma}^{-}\otimes (M^{i,\frac{i}{2}})^{-}\\
		&\oplus V_{\mathbb{Z}\gamma+\frac{1}{2}\gamma}^{+}\otimes (M^{k-i,\frac{k-i}{2}})^{+}\oplus V_{\mathbb{Z}\gamma+\frac{1}{2}\gamma}^{-}\otimes (M^{k-i,\frac{k-i}{2}})^{-}
		\oplus(\oplus_{1\leq j\leq\frac{k-i}{2}-1}V_{\mathbb{Z}\gamma+\frac{-i-2j}{2k}\gamma}\otimes M^{i,i+j}),
	\end{split}
\end{eqnarray*}

\begin{eqnarray*}
	\begin{split}L(k,i)^{-} &= (\oplus_{0\leq j\leq \frac{i}{2}-1, (i,j)\neq(k,0)}V_{\mathbb{Z}\gamma+\frac{i-2j}{2k}\gamma}\otimes M^{i,j})\oplus V_{\mathbb{Z}\gamma}^{+}\otimes (M^{i,\frac{i}{2}})^{-}\oplus V_{\mathbb{Z}\gamma}^{-}\otimes (M^{i,\frac{i}{2}})^{+}\\
		&\oplus V_{\mathbb{Z}\gamma+\frac{1}{2}\gamma}^{+}\otimes (M^{k-i,\frac{k-i}{2}})^{-}\oplus V_{\mathbb{Z}\gamma+\frac{1}{2}\gamma}^{-}\otimes (M^{k-i,\frac{k-i}{2}})^{+}
		\oplus(\oplus_{1\leq j\leq\frac{k-i}{2}-1}V_{\mathbb{Z}\gamma+\frac{-i-2j}{2k}\gamma}\otimes M^{i,i+j}),
	\end{split}
\end{eqnarray*}
as $V_{\mathbb{Z}\gamma}^{+}\otimes K^{+}$-modules.

(2)If $i\in 2\mathbb{Z}+1, k\in 2\mathbb{Z}$,
\begin{eqnarray*}
\begin{split}L(k,i)^{+} &= (\oplus_{0\leq j\leq \frac{i-1}{2}}V_{\mathbb{Z}\gamma+\frac{i-2j}{2k}\gamma}\otimes M^{i,j})
\oplus(\oplus_{1\leq j\leq\frac{k-i-1}{2}}V_{\mathbb{Z}\gamma+\frac{-i-2j}{2k}\gamma}\otimes M^{i,i+j}),
\end{split}
\end{eqnarray*}

\begin{eqnarray*}
\begin{split}L(k,i)^{-} &= (\oplus_{0\leq j\leq \frac{i-1}{2}}V_{\mathbb{Z}\gamma+\frac{i-2j}{2k}\gamma}\otimes M^{i,j})
\oplus(\oplus_{1\leq j\leq\frac{k-i-1}{2}}V_{\mathbb{Z}\gamma+\frac{-i-2j}{2k}\gamma}\otimes M^{i,i+j}),
\end{split}
\end{eqnarray*}
that is, $L(k,i)^{+}\cong L(k,i)^{-}$ as $V_{\mathbb{Z}\gamma}^{+}\otimes K^{+}$-modules.

(3) If $i\in 2\mathbb{Z}+1, k\in 2\mathbb{Z}+1$,
\begin{eqnarray*}
\begin{split}L(k,i)^{+} &= (\oplus_{0\leq j\leq \frac{i-1}{2}}V_{\mathbb{Z}\gamma+\frac{i-2j}{2k}\gamma}\otimes M^{i,j})\oplus V_{\mathbb{Z}\gamma+\frac{1}{2}\gamma}^{+}\otimes (M^{k-i,\frac{k-i}{2}})^{+}\oplus V_{\mathbb{Z}\gamma+\frac{1}{2}\gamma}^{-}\otimes (M^{k-i,\frac{k-i}{2}})^{-}\\
&\oplus(\oplus_{1\leq j\leq\frac{k-i}{2}-1}V_{\mathbb{Z}\gamma+\frac{-i-2j}{2k}\gamma}\otimes M^{i,i+j}),
\end{split}
\end{eqnarray*}

\begin{eqnarray*}
\begin{split}L(k,i)^{-} &= (\oplus_{0\leq j\leq \frac{i-1}{2}}V_{\mathbb{Z}\gamma+\frac{i-2j}{2k}\gamma}\otimes M^{i,j})\oplus V_{\mathbb{Z}\gamma+\frac{1}{2}\gamma}^{+}\otimes (M^{k-i,\frac{k-i}{2}})^{-}\oplus V_{\mathbb{Z}\gamma+\frac{1}{2}\gamma}^{-}\otimes (M^{k-i,\frac{k-i}{2}})^{+}\\
&\oplus(\oplus_{1\leq j\leq\frac{k-i}{2}-1}V_{\mathbb{Z}\gamma+\frac{-i-2j}{2k}\gamma}\otimes M^{i,i+j}),
\end{split}
\end{eqnarray*}
as $V_{\mathbb{Z}\gamma}^{+}\otimes K^{+}$-modules.

(4)  If $i\in 2\mathbb{Z}, k\in 2\mathbb{Z}+1$,
\begin{eqnarray}\label{affineorbifold1}
\begin{split}L(k,i)^{+} &= (\oplus_{0\leq j\leq \frac{i}{2}-1}V_{\mathbb{Z}\gamma+\frac{i-2j}{2k}\gamma}\otimes M^{i,j})\oplus V_{\mathbb{Z}\gamma}^{+}\otimes (M^{i,\frac{i}{2}})^{+}\oplus V_{\mathbb{Z}\gamma}^{-}\otimes (M^{i,\frac{i}{2}})^{-}\\
&\oplus(\oplus_{1\leq j\leq\frac{k-i-1}{2}}V_{\mathbb{Z}\gamma+\frac{-i-2j}{2k}\gamma}\otimes M^{i,i+j}),
\end{split}
\end{eqnarray}

\begin{eqnarray}\label{affineorbifold2}
\begin{split}L(k,i)^{-} &= (\oplus_{0\leq j\leq \frac{i}{2}-1}V_{\mathbb{Z}\gamma+\frac{i-2j}{2k}\gamma}\otimes M^{i,j})\oplus V_{\mathbb{Z}\gamma}^{+}\otimes (M^{i,\frac{i}{2}})^{-}\oplus V_{\mathbb{Z}\gamma}^{-}\otimes (M^{i,\frac{i}{2}})^{+}\\
&\oplus(\oplus_{1\leq j\leq\frac{k-i-1}{2}}V_{\mathbb{Z}\gamma+\frac{-i-2j}{2k}\gamma}\otimes M^{i,i+j}),
\end{split}
\end{eqnarray}
as $V_{\mathbb{Z}\gamma}^{+}\otimes K^{+}$-modules.
\end{lem}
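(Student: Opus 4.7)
The plan is to prove all four cases simultaneously, starting from the known decomposition of the irreducible $L_{\hat{sl_2}}(k,0)$-module $L(k,i)$ as a module for the conformal subalgebra $V_{\mathbb{Z}\gamma}\otimes K(sl_2,k)$ (see \cite{DLY2}) and then tracking the action of the automorphism $\sigma$. Concretely, one has a decomposition of the form
\begin{equation*}
L(k,i) \;=\; \bigoplus_{j} V_{\mathbb{Z}\gamma+\lambda_{i,j}\gamma}\otimes M^{i,j},
\end{equation*}
where $\lambda_{i,j}\in\frac{1}{2k}\mathbb{Z}/\mathbb{Z}$ is determined by the $h(0)$-weight of the $j$-th summand and $M^{i,j}$ is an irreducible $K(sl_2,k)$-module. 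The lemma will follow by decomposing this into $\sigma$-eigenspaces and then restricting to the fixed subalgebra $V_{\mathbb{Z}\gamma}^{+}\otimes K^{+}$.

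First I would analyze how $\sigma$ acts on each summand. On the lattice factor $\sigma$ restricts to the lift of $\theta\colon\gamma\mapsto -\gamma$, and hence sends $V_{\mathbb{Z}\gamma+\lambda\gamma}$ to $V_{\mathbb{Z}\gamma-\lambda\gamma}$; on the parafermion factor it coincides with the automorphism of $K(sl_2,k)$ studied in \cite{JW1,JW2}, sending $M^{i,j}$ to $M^{i,i-j}$. Thus the summands with $\lambda\not\equiv 0,\frac{1}{2}\pmod{1}$ are grouped by $\sigma$ into orbits of size two whose two terms are non-isomorphic as $V_{\mathbb{Z}\gamma}\otimes K(sl_2,k)$-modules but become isomorphic when restricted to $V_{\mathbb{Z}\gamma}^{+}\otimes K^{+}$. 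Each such orbit contributes exactly one copy of a summand $V_{\mathbb{Z}\gamma+\lambda\gamma}\otimes M^{i,j}$ to both $L(k,i)^{+}$ and $L(k,i)^{-}$; by Proposition~\ref{pro:orbifoldlattice} the lattice factor remains irreducible as a $V_{\mathbb{Z}\gamma}^{+}$-module, and $M^{i,j}$ remains irreducible as a $K^{+}$-module whenever it is not one of the $\sigma$-fixed modules of type I listed in Theorem~\ref{thm:orbifold3'}.

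The fixed summands occur precisely when $\lambda\equiv 0$ or $\lambda\equiv\frac{1}{2}\pmod{1}$. The condition $\lambda\equiv 0$ forces $j=i/2$ and requires $i$ to be even; it produces a summand $V_{\mathbb{Z}\gamma}\otimes M^{i,i/2}$. The condition $\lambda\equiv\frac{1}{2}$ forces $2j\equiv i-k\pmod{2k}$ and requires $i\equiv k\pmod{2}$; after identifying $M^{i,(i+k)/2}$ with $M^{k-i,(k-i)/2}$ via the standard parafermion-module relabeling, this summand takes the form $V_{\mathbb{Z}\gamma+\frac{1}{2}\gamma}\otimes M^{k-i,(k-i)/2}$. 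On each such summand $\sigma$ acts diagonally as $\theta\otimes\sigma$, so its $+1$-eigenspace splits further into
\begin{equation*}
V_{\mathbb{Z}\gamma}^{+}\otimes (M^{i,i/2})^{+}\;\oplus\;V_{\mathbb{Z}\gamma}^{-}\otimes (M^{i,i/2})^{-},
\end{equation*}
and its $-1$-eigenspace into the cross combination, with analogous formulas for the half-integer summand. Assembling these ingredients in each of the four parity cases $(i\bmod 2,\,k\bmod 2)$ determines which fixed summands are present and reproduces the four displayed decompositions; in particular case (2) admits no fixed summands, which forces $L(k,i)^{+}\cong L(k,i)^{-}$ as $V_{\mathbb{Z}\gamma}^{+}\otimes K^{+}$-modules.

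The main obstacle will be bookkeeping rather than deep structure: one must normalize the lifts of $\sigma$ to the fixed lattice modules $V_{\mathbb{Z}\gamma}$ and $V_{\mathbb{Z}\gamma+\frac{1}{2}\gamma}$ so that the eigenspace splittings align with the ones fixed in Proposition~\ref{pro:orbifoldlattice}, and then choose a compatible sign convention for $(M^{i,i/2})^{\pm}$ and $(M^{k-i,(k-i)/2})^{\pm}$ so that the pairings of lattice and parafermion eigenspaces on each fixed summand come out as asserted. Once this compatibility is pinned down, the summation range in each of cases (1)--(4) is read off directly from the underlying decomposition of $L(k,i)$.
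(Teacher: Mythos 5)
Your proposal is correct and follows essentially the same route as the paper: both start from the decomposition $L(k,i)=\bigoplus_{j=0}^{k-1}V_{\mathbb{Z}\gamma+(i-2j)\gamma/2k}\otimes M^{i,j}$ of \cite{DLY2}, group the summands into $\sigma$-orbits (with $\sigma$ acting as $\theta$ on the lattice factor and as $j\mapsto i-j$ on the parafermion factor), and split the fixed summands ($\lambda\equiv 0$ and $\lambda\equiv\tfrac{1}{2}$) into the eigenspace products $V^{\pm}\otimes M^{\pm}$. The only cosmetic difference is that for $k$ even the paper imports the base case $L(k,0)^{\pm}$ from Lemmas 3.3 and 3.5 of \cite{IJ} and bootstraps to general even $i$, rather than redoing the orbit analysis directly as you do, but the underlying argument is identical.
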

\begin{proof}
For $k\in 2\Z$,  from Lemma 3.3 and Lemma 3.5 of \cite{IJ}, where $L(k,i)^{\sigma_2,+}$ is our $L(k,i)^{+}$ and $K_0$ is our $K$ here, we have
$$
\begin{array}{ll}
	& L(k,0)^{+}=V_{\Z\gamma}^+\otimes K^+\oplus V_{\Z\gamma}^-\otimes K^-
	\oplus[\oplus_{\tiny{\begin{split}2\leq i<\frac{k}{2} \\  i\in 2\mathbb{Z} \ \  \end{split}}} (V_{\Z\gamma-\frac{i}{k}\gamma}\otimes M^{0,i}\oplus
	V_{\Z\gamma+\frac{i}{k}\gamma}\otimes M^{0,k-i})^+]\\&
	\\& \oplus_{\tiny{\begin{split}2\leq i<\frac{k}{2} \\  i\in 2\mathbb{Z} +1\ \  \end{split}}} (V_{\Z\gamma-\frac{i}{k}\gamma}\otimes M^{0,i}\oplus
	V_{\Z\gamma+\frac{i}{k}\gamma}\otimes M^{0,k-i})^+
	\oplus V_{\Z\gamma+\frac{1}{2}\gamma}^+\otimes  (M^{0,\frac{k}{2}})^+
	\oplus V_{\Z\gamma+\frac{1}{2}\gamma}^-\otimes  (M^{0,\frac{k}{2}})^-;	
\end{array}
$$
$$
\begin{array}{ll}
&L(k,0)^{-}=V_{\Z\gamma}^+\otimes K^-\oplus V_{\Z\gamma}^-\otimes K^+\oplus
[\oplus_{\tiny{\begin{split}2\leq i<\frac{k}{2} \\  i\in 2\mathbb{Z} \ \  \end{split}}} (V_{\Z\gamma-\frac{i}{k}\gamma}\otimes M^{0,i}\oplus
V_{\Z\gamma+\frac{i}{k}\gamma}\otimes M^{0,k-i})^-]
\\&\\&
\oplus_{\tiny{\begin{split}2\leq i<\frac{k}{2} \\  i\in 2\mathbb{Z} +1\ \  \end{split}}} (V_{\Z\gamma-\frac{i}{k}\gamma}\otimes M^{0,i}\oplus
V_{\Z\gamma+\frac{i}{k}\gamma}\otimes M^{0,k-i})^-\oplus V_{\Z\gamma+\frac{1}{2}\gamma}^+\otimes  (M^{0,\frac{k}{2}})^-
\oplus V_{\Z\gamma+\frac{1}{2}\gamma}^-\otimes  (M^{0,\frac{k}{2}})^+
\end{array}
$$
Note that for $i\in2\Z$,
$$
V_{\mathbb{Z}\gamma}^{+}\otimes (M^{i,\frac{i}{2}})^{+}\oplus V_{\mathbb{Z}\gamma}^{-}\otimes (M^{i,\frac{i}{2}})^{-}\subseteq L(k,i)^+.
$$
Then we have (1).

For $k\in2\Z$, $i\in 2\Z+1$, note that as $V_{\mathbb{Z}\gamma}^{+}\otimes K^{+}$-modules,
$$
V_{\mathbb{Z}\gamma+\frac{i-2j}{2k}\gamma}\otimes M^{i,j}\cong
V_{\mathbb{Z}\gamma+\frac{i-2(i-j)}{2k}\gamma}\otimes M^{i,i-j},
$$
for $0\leq j\leq \frac{i-1}{2}$. As $V_{\Z\gamma}^+$-modules, for $i+1\leq j\leq k-1$,
$$
V_{\Z\gamma+\frac{i-2j}{2k}\gamma}\cong V_{\Z\gamma+\frac{i-2(k-(j-i))}{2k}\gamma}.
$$
As $K^+$-modules, for $i+1\leq j\leq k-1$,
$$
M^{i,j}\cong M^{k-i,j-i}\cong M^{k-i,k-j}\cong M^{i,k-j+i}.
$$
Then for  $i+1\leq j\leq k-1$, as $V_{\Z\gamma}\otimes K^+$-modules,
$$
V_{\Z\gamma+\frac{i-2j}{2k}\gamma}\otimes M^{i,j}\cong V_{\Z\gamma+\frac{i-2(k-(j-i))}{2k}\gamma}\otimes
	 M^{i,k-j+i}.
$$
Thus (2) follows.

We now assume that $k\in 2\Z+1$, recall from \cite{DLY2},
$$
L(k,i)=\bigoplus_{j=0}^{k-1}V_{\mathbb{Z}\gamma+(i-2j)\gamma/2k}\otimes M^{i,j}.
$$
By \cite{DN} and \cite{JW1},  for $0\leq j\leq i$, $j\neq \frac{i}{2}$ (if $i\in 2\Z$),  $V_{\mathbb{Z}\gamma+\frac{i-2j}{2k}\gamma}\otimes M^{i,j}$ and $V_{\mathbb{Z}\gamma+\frac{i-2(i-j)}{2k}\gamma}\otimes M^{i,i-j}$ are irreducible and isomorphic $V_{\Z\gamma}^+\otimes K^+$-modules, and for $1\leq j\leq k-i-1$, $j\neq \frac{k-i}{2}$ (if $i\in 2\Z+1$) , $V_{\mathbb{Z}\gamma+\frac{i-2(i+j)}{2k}\gamma}\otimes M^{i,i+j}$ and $V_{\mathbb{Z}\gamma+\frac{i-2(k-j)}{2k}\gamma}\otimes M^{i,k-j}$ are are irreducible and isomorphic $V_{\Z\gamma}^+\otimes K^+$-modules.  If $i\in 2\Z$, $V_{\mathbb{Z}\gamma}^{+}\otimes (M^{i,\frac{i}{2}})^{+}\oplus V_{\mathbb{Z}\gamma}^{-}\otimes (M^{i,\frac{i}{2}})^{-}\subseteq L(k,i)^+$. If $i\in 2\Z+1$,  $V_{\mathbb{Z}\gamma+\frac{1}{2}\gamma}^{+}\otimes (M^{k-i,\frac{k-i}{2}})^{+}\oplus V_{\mathbb{Z}\gamma+\frac{1}{2}\gamma}^{-}\otimes (M^{k-i,\frac{k-i}{2}})^{-}\subseteq L(k,i)^+$. Then we get the desired decompositions.
\end{proof}

The following lemma can be obtained from Lemma 3.6, Lemma 3.9 and Lemma 3.10 of \cite{IJ} where $\overline{L(k,i)}^{\sigma_2,+}$ is our  $\overline{L(k,i)}^{+}$ here and note that $\overline{L(k,\frac{k}{2})}^{\sigma_2,+}=\overline{L(k,\frac{k}{2})}^{\sigma_2,1}+\overline{L(k,\frac{k}{2})}^{\sigma_2,2}$, $\overline{L(k,\frac{k}{2})}^{\sigma_2,-}=\overline{L(k,\frac{k}{2})}^{\sigma_2,3}+\overline{L(k,\frac{k}{2})}^{\sigma_2,4}$ in Lemma 3.10 of \cite{IJ}.
 \begin{lem}\label{twisted} Let $k$ be a positive integer and $0\leq i\leq k$, we have

(1) If $i\in 2\mathbb{Z}, i\neq\frac{k}{2}$,
\begin{eqnarray}\label{twistedmod1}
\begin{split}\overline{L(k,i)}^{+} &= V_{\mathbb{Z}\gamma}^{T_{1},+}\otimes W(k,i)^{+}\oplus V_{\mathbb{Z}\gamma}^{T_{1},-}\otimes W(k,i)^{-},
\end{split}
\end{eqnarray}

\begin{eqnarray}\label{twistedmod2}
\begin{split}\overline{L(k,i)}^{-} &= V_{\mathbb{Z}\gamma}^{T_{1},-}\otimes W(k,i)^{+}\oplus V_{\mathbb{Z}\gamma}^{T_{1},+}\otimes W(k,i)^{-},
\end{split}
\end{eqnarray}
as $V_{\mathbb{Z}\gamma}^{+}\otimes K^{+}$-modules.

(2) If $i\in 2\mathbb{Z}+1, i\neq\frac{k}{2}$,
\begin{eqnarray*}
\begin{split}\overline{L(k,i)}^{+} &= V_{\mathbb{Z}\gamma}^{T_{2},+}\otimes W(k,i)^{+}\oplus V_{\mathbb{Z}\gamma}^{T_{2},-}\otimes W(k,i)^{-},
\end{split}
\end{eqnarray*}

\begin{eqnarray*}
\begin{split}\overline{L(k,i)}^{-} &= V_{\mathbb{Z}\gamma}^{T_{2},-}\otimes W(k,i)^{+}\oplus V_{\mathbb{Z}\gamma}^{T_{2},+}\otimes W(k,i)^{-},
\end{split}
\end{eqnarray*}
as $V_{\mathbb{Z}\gamma}^{+}\otimes K^{+}$-modules.

(3) If $k\in 4\mathbb{Z}+2$,
\begin{eqnarray*}
\begin{split}\overline{L(k,\frac{k}{2})}^{+} &= V_{\mathbb{Z}\gamma}^{T_{2},+}\otimes (W(k,\frac{k}{2})^{+}\oplus\widetilde{W(k,\frac{k}{2}})^{-})\oplus V_{\mathbb{Z}\gamma}^{T_{2},-}\otimes (W(k,\frac{k}{2})^{-}\oplus\widetilde{W(k,\frac{k}{2}})^{+}),
\end{split}
\end{eqnarray*}

\begin{eqnarray*}
\begin{split}\overline{L(k,\frac{k}{2})}^{-} &= V_{\mathbb{Z}\gamma}^{T_{2},-}\otimes (W(k,\frac{k}{2})^{+}\oplus\widetilde{W(k,\frac{k}{2}})^{-})\oplus V_{\mathbb{Z}\gamma}^{T_{2},+}\otimes (W(k,\frac{k}{2})^{-}\oplus\widetilde{W(k,\frac{k}{2}})^{+}),
\end{split}
\end{eqnarray*}
as $V_{\mathbb{Z}\gamma}^{+}\otimes K^{+}$-modules.

(4) If $k\in 4\mathbb{Z}$,
\begin{eqnarray*}
\begin{split}\overline{L(k,\frac{k}{2})}^{+} &= V_{\mathbb{Z}\gamma}^{T_{1},+}\otimes (W(k,\frac{k}{2})^{+}\oplus\widetilde{W(k,\frac{k}{2}})^{-})\oplus V_{\mathbb{Z}\gamma}^{T_{1},-}\otimes (W(k,\frac{k}{2})^{-}\oplus\widetilde{W(k,\frac{k}{2}})^{+}),
\end{split}
\end{eqnarray*}
\begin{eqnarray*}
\begin{split}\overline{L(k,\frac{k}{2})}^{-} &= V_{\mathbb{Z}\gamma}^{T_{1},-}\otimes (W(k,\frac{k}{2})^{+}\oplus\widetilde{W(k,\frac{k}{2}})^{-})\oplus V_{\mathbb{Z}\gamma}^{T_{1},+}\otimes (W(k,\frac{k}{2})^{-}\oplus\widetilde{W(k,\frac{k}{2}})^{+}),
\end{split}
\end{eqnarray*}
as $V_{\mathbb{Z}\gamma}^{+}\otimes K^{+}$-modules.

\end{lem}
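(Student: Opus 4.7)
The plan is to mimic the strategy used for Lemma~\ref{decomp.}, but now in the $\sigma$-twisted sector of $L_{\hat{sl_2}}(k,0)$. The starting observation is that the automorphism $\sigma$ restricts to the lattice involution $\theta$ on $V_{\mathbb{Z}\gamma}$ and to the induced involution on $K=K(sl_2,k)$; consequently any irreducible $\sigma$-twisted $L_{\hat{sl_2}}(k,0)$-module, when restricted to the conformally embedded subVOA $V_{\mathbb{Z}\gamma}\otimes K$, becomes a $(\theta,\sigma|_{K})$-twisted module. By the standard tensor product factorization for twisted modules over tensor product vertex operator algebras, it therefore splits as a direct sum of tensors $V_{\mathbb{Z}\gamma}^{T_j}\otimes N$, where $V_{\mathbb{Z}\gamma}^{T_j}$ ($j=1,2$) are the two irreducible $\theta$-twisted $V_{\mathbb{Z}\gamma}$-modules from Proposition~\ref{pro:orbifoldlattice} and $N$ is an irreducible $\sigma$-twisted $K$-module from the list of Theorem~\ref{thm:orbifold3'}.

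The next step is to pin down which $T_j$ and which $N$ enter. The $L(0)$-spectrum of $V_{\mathbb{Z}\gamma}^{T_1}$ starts at $\tfrac{1}{16}$ modulo $\tfrac{1}{4k}\mathbb{Z}$, while $V_{\mathbb{Z}\gamma}^{T_2}$ is the coset shifted by $\tfrac{1}{4}$; matching against the minimal conformal weight of $\overline{L(k,i)}$, together with the $h(0)$-eigenvalue on the lowest component, forces $T_1$ for $i$ even and $T_2$ for $i$ odd. For $i\neq k/2$ the module $W(k,i)$ is the unique irreducible $\sigma$-twisted $K$-module with the correct top weight, so a simple multiplicity count (or comparison of graded characters) yields $\overline{L(k,i)}\cong V_{\mathbb{Z}\gamma}^{T_j}\otimes W(k,i)$ as $V_{\mathbb{Z}\gamma}\otimes K$-modules. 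Restricting to $V_{\mathbb{Z}\gamma}^{+}\otimes K^{+}$ and splitting each factor into $\sigma$-eigenspaces --- noting that the $(+)$-eigenspace of $\sigma\otimes\sigma$ is $(+,+)\oplus(-,-)$ and the $(-)$-eigenspace is $(+,-)\oplus(-,+)$ --- then produces the decompositions of cases (1) and (2).

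For $i=k/2$ the list in Theorem~\ref{thm:orbifold3'} contains two twisted $K^{+}$-modules at the relevant top weight, namely $W(k,k/2)^{\pm}$ and $\widetilde{W(k,k/2)}^{\pm}$, and both must appear with multiplicity one. The choice of $T_1$ versus $T_2$ now depends on the parity of $k/2$, giving the dichotomy between cases (3) and (4). The technical point which I expect to be the main obstacle is to identify the correct pairing of $\sigma$-eigenvalues: namely that $\widetilde{W(k,k/2)}^{\pm}$ must be tensored with the opposite $V_{\mathbb{Z}\gamma}^{T_j,\mp}$-summand relative to $W(k,k/2)^{\pm}$. Verifying this sign flip requires computing the $\sigma$-action on distinguished lowest-weight vectors of $\overline{L(k,k/2)}$ and comparing with the convention used to define the $\pm$ splitting in \cite{JW1}. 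Rather than doing this from scratch, the cleanest route is to appeal directly to Lemmas~3.6, 3.9 and 3.10 of \cite{IJ}, translating $\overline{L(k,i)}^{\sigma_2,\pm}$ to our $\overline{L(k,i)}^{\pm}$ and combining $\overline{L(k,\tfrac{k}{2})}^{\sigma_2,1}+\overline{L(k,\tfrac{k}{2})}^{\sigma_2,2}$ into $\overline{L(k,\tfrac{k}{2})}^{+}$ as indicated in the paragraph preceding the lemma.
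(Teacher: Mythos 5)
Your proposal lands in the same place as the paper: the paper's entire ``proof'' of this lemma is the sentence preceding it, which simply imports Lemmas~3.6, 3.9 and 3.10 of \cite{IJ} with the stated translation of notation (and the regrouping $\overline{L(k,\frac{k}{2})}^{\sigma_2,+}=\overline{L(k,\frac{k}{2})}^{\sigma_2,1}+\overline{L(k,\frac{k}{2})}^{\sigma_2,2}$, etc.), and your final step is exactly that citation. One caveat about your preliminary sketch: the claim that $V_{\mathbb{Z}\gamma}^{T_1}$ and $V_{\mathbb{Z}\gamma}^{T_2}$ can be told apart by their $L(0)$-spectra is not right --- the two irreducible $\theta$-twisted $V_{\mathbb{Z}\gamma}$-modules have identical graded characters and are distinguished by the sign of the action of $e^{\gamma}+e^{-\gamma}$ on the top level, so the parity-of-$i$ dichotomy cannot be read off from conformal weights alone; this does not affect correctness since you ultimately defer to \cite{IJ} for the identification anyway.
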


\section{Classification of the irreducible modules of $K(\g, k)^{\sigma}$
}\label{Sect:affine case}\def\theequation{3.\arabic{equation}}

In this section, let $K(\g,k)^{\sigma}$ be the orbifold vertex operator subalgebra of the parafermion vertex operator algebra $K(\g,k)$ associated to Lie superalgebra $\g=osp(1|2)$, i.e., the fixed-point subalgebra of $K(\g,k)$ under $\sigma$. We then classify the irreducible modules for $K(\g,k)^{\sigma}$.

Let $\left(V,Y,1,\omega\right)$ be a vertex operator algebra (see
\cite{FLM}, \cite{LL}) and $g$ an automorphism of $V$ with finite order $T$.  Let $W\left\{ z\right\} $
denote the space of $W$-valued formal series in arbitrary complex
powers of $z$ for a vector space $W$. Denote the decomposition of $V$ into eigenspaces with respect to the action of $g$ by $$V=\bigoplus_{r\in\Z}V^{r},$$
where $V^{r}=\{v\in V|\ gv=e^{-\frac{2\pi ir}{T}}v\},$ $i=\sqrt{-1}$.

\begin{definition}A \emph{weak $g$-twisted $V$-module} $M$ is
a vector space with a linear map
\[
Y_{M}:V\to\left(\text{End}M\right)\{z\}
\]

\[
v\mapsto Y_{M}\left(v,z\right)=\sum_{n\in\mathbb{Q}}v_{n}z^{-n-1}\ \left(v_{n}\in\mbox{End}M\right)
\]
which satisfies the following conditions for $0\leq r\leq T-1$, $u\in V^{r}\ ,v\in V, w\in M$:

\[ Y_{M}\left(u,z\right)=\sum_{n\in\frac{r}{T}+\mathbb{Z}}u_{n}z^{-n-1}
\]

\[
u_{n}w=0\ {\rm for} \ n\gg0,
\]

\[
Y_{M}\left(\mathbf{1},z\right)=Id_{M},
\]

\[
z_{0}^{-1}\text{\ensuremath{\delta}}\left(\frac{z_{1}-z_{2}}{z_{0}}\right)Y_{M}\left(u,z_{1}\right)
Y_{M}\left(v,z_{2}\right)-z_{0}^{-1}\delta\left(\frac{z_{2}-z_{1}}{-z_{0}}\right)Y_{M}\left(v,z_{2}\right)Y_{M}\left(u,z_{1}\right)
\]

\[
=z_{1}^{-1}\left(\frac{z_{2}+z_{0}}{z_{1}}\right)^{\frac{r}{T}}\delta\left(\frac{z_{2}+z_{0}}{z_{1}}\right)Y_{M}\left(Y\left(u,z_{0}\right)v,z_{2}\right),
\]
 where $\delta\left(z\right)=\sum_{n\in\mathbb{Z}}z^{n}$. \end{definition}
 The following identities are the consequences of the twisted-Jacobi identity \cite{DLM3} (see also \cite{Ab}, \cite{DJ}).
 \begin{eqnarray}[u_{m+\frac{r}{T}},v_{n+\frac{s}{T}}]=\sum_{i=0}^{\infty}\binom{m+\frac{r}{T}}{i} (u_{i}v)_{m+n+\frac{r+s}{T}-i},\label{eq:3.1.}\end{eqnarray}
 \begin{eqnarray}\sum_{i\geq 0}\binom{\frac{r}{T}}{i}(u_{m+i}v)_{n+\frac{r+s}{T}-i}=\sum_{i\geq 0}(-1)^{i}\binom{m}{i}(u_{m+\frac{r}{T}-i}v_{n+\frac{s}{T}+i}-(-1)^{m}v_{m+n+\frac{s}{T}-i}u_{\frac{r}{T}+i}),\label{eq:3.2.}\end{eqnarray}
 where $u\in V^{r}, \ v\in V^{s},\ m,n\in \Z$.

\begin{definition}

A \emph{$g$-twisted $V$-module} is a weak $g$-twisted $V$-module\emph{
}$M$ which carries a $\mathbb{C}$-grading $M=\bigoplus_{\lambda\in\mathbb{C}}M_{\lambda},$
where $M_{\lambda}=\{w\in M|L(0)w=\lambda w\}$ and $L(0)$ is one of the coefficient operators of $Y(\omega,z)=\sum_{n\in\mathbb{Z}}L(n)z^{-n-2}.$
Moreover we require
that $\dim M_{\lambda}$ is finite and for fixed $\lambda,$ $M_{\lambda+\frac{n}{T}}=0$
for all small enough integers $n.$

\end{definition}

\begin{definition}An \emph{admissible $g$-twisted $V$-module} $M=\oplus_{n\in\frac{1}{T}\mathbb{Z}_{+}}M\left(n\right)$
is a $\frac{1}{T}\mathbb{Z}_{+}$-graded weak $g$-twisted module
such that $u_{m}M\left(n\right)\subset M\left(\mbox{wt}u-m-1+n\right)$
for homogeneous $u\in V$ and $m,n\in\frac{1}{T}\mathbb{Z}.$ $ $

\end{definition}

If $g=Id_{V}$, we have the notions of weak, ordinary and admissible
$V$-modules \cite{DLM3}.

  Recall that $L(k,i)$ for $0\leq i\leq k$ are the irreducible modules for the rational vertex operator algebra $L_{\hat{sl_{2}}}(k,0)$ with the top level $U^{i}=\bigoplus_{j=0}^{i}\mathbb{C}v^{i,j}$ an $(i+1)$-dimensional irreducible module for $\C h(0)\oplus \C e(0)\oplus \C f(0)\cong sl_2$.  From \cite{CFK}, we know that the vertex operator superalgebra $L_{\hat{g}}(k,0)=L^{even}_{\hat{g}}(k,0)\oplus L^{odd}_{\hat{g}}(k,0)$, where $L^{even}_{\hat{g}}(k,0)$ is its even subalgebra which is simple and $L^{odd}_{\hat{g}}(k,0)$ is an order two simple current with $$L^{even}_{\hat{g}}(k,0)=\bigoplus_{i=0, \ i\ even}^{k}L(k,i)\otimes V_{i+1,1} $$and
  $$L^{odd}_{\hat{g}}(k,0)=\bigoplus_{i=0, \ i\ odd}^{k}L(k,i)\otimes V_{i+1,1}.$$
From the definition of the automorphism $\sigma$, we see that $L_{\hat{g}}(k,0)^{\sigma}=L^{even}_{\hat{g}}(k,0)^{\sigma}$, and $\sigma$ is the automorphism of vertex operator algebra $L^{even}_{\hat{g}}(k,0)$ with order $2$. Note that $L_{\hat{sl_{2}}}(k,0)^{\sigma}$ is the subalgebra of $L^{even}_{\hat{g}}(k,0)^{\sigma}$. The irreducible modules of $L_{\hat{sl_{2}}}(k,0)^{\sigma}$ are classified in \cite{JW2}.

\begin{thm}\cite{JW2}\label{thm:affine sl2}
There are $4k+4$ inequivalent irreducible modules of $L_{\hat{sl_{2}}}(k,0)^{\sigma}$.
  More precisely, the set
 \begin{eqnarray*}
&& \{ L(k,i)^{+}, L(k,i)^{-}, \overline{L(k,i)}^{+}, \overline{L(k,i)}^{-} \ \mbox{for} \ 0\leq i\leq k\} \\
 \end{eqnarray*} gives all inequivalent irreducible $L_{\hat{sl_{2}}}(k,0)^{\sigma}$-modules.
\end{thm}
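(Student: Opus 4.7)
The plan is to apply the general orbifold framework for rational, $C_2$-cofinite VOAs under a finite cyclic group action. Since $L_{\hat{sl_2}}(k,0)$ is rational and $C_2$-cofinite and $\sigma$ has order $2$ on it, the theorems of Carnahan--Miyamoto (and the module correspondence of Dong--Ryba--Xu / Miyamoto--Tanabe) guarantee that $L_{\hat{sl_2}}(k,0)^{\sigma}$ is rational and that its irreducible modules are in bijection with pairs $(M,\lambda)$ where $M$ runs over irreducible $\sigma^i$-twisted $L_{\hat{sl_2}}(k,0)$-modules ($i=0,1$) up to $\sigma$-equivalence and $\lambda$ is a character of the stabilizer of $M$ in $\langle\sigma\rangle$. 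For $\sigma$ of order $2$, this amounts to: for each $\sigma$-stable irreducible ($\sigma^i$-twisted) module $M$, the eigenspace decomposition $M = M^+ \oplus M^-$ under the canonical action of $\sigma$ yields two inequivalent irreducible $L_{\hat{sl_2}}(k,0)^{\sigma}$-modules, while pairs exchanged by $\sigma$ contribute a single one.

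First I would handle the untwisted sector. The $k+1$ irreducible $L_{\hat{sl_2}}(k,0)$-modules are $L(k,i)$, $0 \le i \le k$. The automorphism $\sigma$ restricted to $sl_2$ is the Chevalley involution, which preserves each finite-dimensional irreducible $sl_2$-representation up to isomorphism; hence each top level $U^{i}$, and then each $L(k,i)$, is $\sigma$-stable. Fixing a lift of $\sigma$ to each $L(k,i)$ (normalized so it preserves the highest weight space) and decomposing into $\pm 1$-eigenspaces produces the $2(k+1)$ irreducible modules $L(k,i)^{+}$, $L(k,i)^{-}$.

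Next I would construct the $\sigma$-twisted sector. Using the conformal embedding $V_{\mathbb{Z}\gamma}\otimes K(sl_2,k) \hookrightarrow L_{\hat{sl_2}}(k,0)$ and the decomposition $L(k,i)=\bigoplus_{j=0}^{k-1} V_{\mathbb{Z}\gamma+(i-2j)\gamma/2k}\otimes M^{i,j}$ recalled from \cite{DLY2}, I would build $\sigma$-twisted modules by tensoring the $\theta$-twisted $V_{\mathbb{Z}\gamma}$-modules $V_{\mathbb{Z}\gamma}^{T_1}$, $V_{\mathbb{Z}\gamma}^{T_2}$ (which implement the lattice-sign flip part of $\sigma$) with appropriate $K^{\sigma}$-modules, then assembling the pieces so that the result is a $\sigma$-twisted $L_{\hat{sl_2}}(k,0)$-module. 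This produces $k+1$ candidates $\overline{L(k,i)}$; a counting argument (matching the number of $\sigma$-twisted irreducibles predicted by the dimension of the space of conformal blocks on the torus, i.e.\ the number of $\sigma$-stable untwisted simples) shows that there are exactly $k+1$ such irreducibles, each $\sigma$-stable, and hence each splits as $\overline{L(k,i)}^{+}\oplus \overline{L(k,i)}^{-}$ under the canonical lift of $\sigma$.

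Combining the two sectors gives $2(k+1)+2(k+1)=4k+4$ inequivalent irreducible $L_{\hat{sl_2}}(k,0)^{\sigma}$-modules, and the orbifold correspondence ensures the list is exhaustive. The main obstacle is the twisted sector: one must verify that the candidate $\sigma$-twisted modules are irreducible, mutually inequivalent, and each $\sigma$-stable — most cleanly done by decomposing them as $V_{\mathbb{Z}\gamma}^{+}\otimes K^{+}$-modules (as in Lemma \ref{twisted}) and invoking the already-classified modules of $V_{\mathbb{Z}\gamma}^{+}$ from \cite{DN} and of $K^{+}$ from \cite{JW1} to separate the isotypic components. The untwisted irreducibility of $L(k,i)^{\pm}$ is comparatively routine once one observes that $\sigma$ acts non-trivially on $L(k,i)$ for every $i$.
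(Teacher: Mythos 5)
This statement is quoted from \cite{JW2} and the present paper gives no proof of it, so there is no in-paper argument to compare against; judged on its own terms, your outline is sound and follows essentially the route taken in the cited reference: rationality of the orbifold via Miyamoto/Carnahan--Miyamoto, eigenspace splitting $M=M^{+}\oplus M^{-}$ of each $\sigma$-stable (twisted or untwisted) irreducible, and exhaustiveness from the general orbifold module correspondence together with the count of $\sigma$-stable simples. The one place where your plan is heavier than necessary is the twisted sector: since $\sigma=e^{2\pi\sqrt{-1}h''(0)}$ with $h''=\frac{1}{4}(e+f)$ is \emph{inner}, the irreducible $\sigma$-twisted modules $\overline{L(k,i)}$ can be produced directly from the untwisted $L(k,i)$ by Li's $\Delta(h'',z)$-operator, which gives irreducibility, mutual inequivalence, and the count $k+1$ for free; the decomposition over $V_{\mathbb{Z}\gamma}^{+}\otimes K^{+}$ that you propose to assemble them from is then a consequence (as in Lemma \ref{twisted}) rather than the construction. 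Do note that isomorphism of $L(k,i)^{+}$ and $L(k,i)^{-}$ as $V_{\mathbb{Z}\gamma}^{+}\otimes K^{+}$-modules (which happens for $i$ odd, $k$ even) does not contradict their inequivalence over $L_{\hat{sl_2}}(k,0)^{\sigma}$, which follows from the Schur--Weyl-type duality for the $\langle\sigma\rangle$-action; your appeal to that duality covers this point.
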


\begin{rmk}\label{rmk:affine sl2} With the notations in  Theorem \ref{thm:affine sl2}, we call $L(k,i)^{+}, L(k,i)^{-}$ untwisted type modules and  $\overline{L(k,i)}^{+}, \overline{L(k,i)}^{-} $ twisted type modules respectively.

\end{rmk}

Let $\{ h, e, f\}$
be a standard Chevalley basis of $sl_2$ with brackets $[h,e] = 2e$, $[h,f] = -2f$,
$[e,f] = h$. Set $$h^{'}=e+f, \ e^{'}=\frac{1}{2}(h-e+f),\ f^{'}=\frac{1}{2}(h+e-f).$$ Then $\{h^{'}, e^{'}, f^{'}\}$ is a $sl_2$-triple. Let $h^{''}=\frac{1}{4}h^{'}=\frac{1}{4}(e+f)$, from \cite{L2}, we know that $e^{2\pi \sqrt{-1}h^{''}(0)}$ is an automorphism of $L_{\hat{g}}(k,0)$ and
$$e^{2\pi \sqrt{-1}h^{''}(0)}(h^{'})=h^{'},\ e^{2\pi \sqrt{-1}h^{''}(0)}(e^{'})=-e^{'},\ e^{2\pi \sqrt{-1}h^{''}(0)}(f^{'})=-f^{'},$$
$$e^{2\pi \sqrt{-1}h^{''}(0)}(x+y)=-\sqrt{-1}(x+y),\ e^{2\pi \sqrt{-1}h^{''}(0)}(x-y)=\sqrt{-1}(x-y).$$
Thus $e^{2\pi \sqrt{-1}h^{''}(0)}=\sigma$.

Now we recall the classification of the modules of the vertex operator algebra $L_{\hat{g}}(k,0)^{\sigma}$\cite{JW4}.

 \begin{thm}\cite{JW4}\label{thm:affinetwist'} The modules
 \begin{eqnarray}\label{mod1.}
 M_{r}^{even,+}=\bigoplus_{i=0, \ i\in 4\mathbb{Z}}^{k}L(k,i)^{+}\otimes V_{i+1,r}\bigoplus_{i=0, \ i\in 4\mathbb{Z}+2}^{k}L(k,i)^{-}\otimes V_{i+1,r}\end{eqnarray}
  \begin{eqnarray}\label{mod2.}
 M_{r}^{even,-}=\bigoplus_{i=0, \ i\in 4\mathbb{Z}}^{k}L(k,i)^{-}\otimes V_{i+1,r}\bigoplus_{i=0, \ i\in 4\mathbb{Z}+2}^{k}L(k,i)^{+}\otimes V_{i+1,r}\end{eqnarray}
 \begin{eqnarray}\label{mod3.}
 \overline{M_{r}^{even,+}}=\bigoplus_{i=0, \ i\in 4\mathbb{Z}}^{k}\overline{L(k,i)}^{+}\otimes V_{i+1,r}\bigoplus_{i=0, \ i\in 4\mathbb{Z}+2}^{k}\overline{L(k,i)}^{-}\otimes V_{i+1,r}\end{eqnarray}
 \begin{eqnarray}\label{mod4.}
 \overline{M_{r}^{even,-}}=\bigoplus_{i=0, \ i\in 4\mathbb{Z}}^{k}\overline{L(k,i)}^{-}\otimes V_{i+1,r}\bigoplus_{i=0, \ i\in 4\mathbb{Z}+2}^{k}\overline{L(k,i)}^{+}\otimes V_{i+1,r}\end{eqnarray}

\begin{eqnarray}\label{mod5.}M_{r}^{odd,1}=\bigoplus_{i=0, \ i\in 4\mathbb{Z}+1}^{k}L(k,i)^{+}\otimes V_{i+1,r}\bigoplus_{i=0, \ i\in 4\mathbb{Z}+3}^{k}L(k,i)^{-}\otimes V_{i+1,r}
  \end{eqnarray}
\begin{eqnarray}\label{mod6.}M_{r}^{odd,2}=\bigoplus_{i=0, \ i\in 4\mathbb{Z}+1}^{k}L(k,i)^{-}\otimes V_{i+1,r}\bigoplus_{i=0, \ i\in 4\mathbb{Z}+3}^{k}L(k,i)^{+}\otimes V_{i+1,r}
  \end{eqnarray}

\begin{eqnarray}\label{mod7.}\overline{M_{r}^{odd,1}}=\bigoplus_{i=0, \ i\in 4\mathbb{Z}+1}^{k}\overline{L(k,i)}^{+}\otimes V_{i+1,r}\bigoplus_{i=0, \ i\in 4\mathbb{Z}+3}^{k}\overline{L(k,i)}^{-}\otimes V_{i+1,r}
  \end{eqnarray}

 \begin{eqnarray}\label{mod8.}\overline{M_{r}^{odd,2}}=\bigoplus_{i=0, \ i\in 4\mathbb{Z}+1}^{k}\overline{L(k,i)}^{-}\otimes V_{i+1,r}\bigoplus_{i=0, \ i\in 4\mathbb{Z}+3}^{k}\overline{L(k,i)}^{+}\otimes V_{i+1,r}
  \end{eqnarray}

  for $1\leq r\leq 2k+2$ form a complete list of inequivalent simple modules of the vertex operator algebra $L_{\hat{g}}(k,0)^{\sigma}$.

 \end{thm}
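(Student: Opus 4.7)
The plan is to exploit the conformal embedding $L_{\hat{sl_2}}(k,0)\otimes L^{Vir}(c_{2k+3,k+2},0)\hookrightarrow L_{\hat{g}}(k,0)$ of \cite{CFK} together with the fact, observed earlier in the paper, that $\sigma=e^{2\pi\sqrt{-1}\,h''(0)}$ is inner in the sense of Li. Since $\sigma$ has order $4$ on $L_{\hat{g}}(k,0)$ but $\sigma^{2}$ equals the parity involution, the fixed subalgebra agrees with $L^{even}_{\hat{g}}(k,0)^{\sigma}$, so the problem reduces to a $\mathbb{Z}/2$-orbifold of the bosonic vertex operator algebra $L^{even}_{\hat{g}}(k,0)$. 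Because $\sigma$ is inner, $\sigma$-twisted modules arise from ordinary $L_{\hat{g}}(k,0)$-modules via Li's delta-operator construction, which treats twisted and untwisted families uniformly.

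First I would classify the irreducible untwisted $L_{\hat{g}}(k,0)$-modules. Since $L_{\hat{sl_2}}(k,0)\otimes L^{Vir}(c_{2k+3,k+2},0)$ is rational and $L_{\hat{g}}(k,0)$ is a conformal extension of it by the CFK decomposition, the irreducibles take the form $\bigoplus_{i=0}^{k}L(k,i)\otimes V_{i+1,r}$ for $1\leq r\leq 2k+2$, naturally split into even-$i$ and odd-$i$ parts by the $\mathbb{Z}/2$-grading of $L_{\hat{g}}(k,0)$. Applying Li's twist with parameter $\tfrac{1}{2}$ then produces the $\sigma$-twisted analogues built from $\overline{L(k,i)}$. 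Next I would take $\sigma$-eigenspaces summand by summand: by Lemmas \ref{decomp.} and \ref{twisted}, each untwisted piece $L(k,i)\otimes V_{i+1,r}$ splits as $L(k,i)^{+}\oplus L(k,i)^{-}$ tensored with $V_{i+1,r}$, and each twisted piece splits analogously into $\overline{L(k,i)}^{\pm}\otimes V_{i+1,r}$. The compatibility constraints imposed by the extension $L_{\hat{g}}(k,0)^{\sigma}\supset L_{\hat{sl_2}}(k,0)^{\sigma}\otimes L^{Vir}(c_{2k+3,k+2},0)$ then force a coherent choice of signs across $i$ modulo $4$, producing exactly the alternating patterns in \eqref{mod1.}--\eqref{mod8.}.

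Irreducibility of each candidate follows from the irreducibility of each summand as an $L_{\hat{sl_2}}(k,0)^{\sigma}\otimes L^{Vir}(c_{2k+3,k+2},0)$-module (by Theorem \ref{thm:affine sl2} and rationality of the Virasoro factor) combined with a standard conformal-extension argument. Pairwise inequivalence is checked by comparing top-level $L(0)$-weights (which depend on both $i$ and $r$), by distinguishing untwisted from twisted families via integrality of the gradings, and by reading off the sign of the $\sigma$-action on the lowest weight spaces. Completeness is obtained by an orbifold counting argument, matching the $8(2k+2)=16(k+1)$ listed modules against the count predicted by the $\mathbb{Z}/2$-orbifold Verlinde formula, or equivalently by the total dimension of the twisted and untwisted Zhu algebras of $L^{even}_{\hat{g}}(k,0)$. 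The main obstacle I anticipate is the second step, namely identifying the \emph{correct} coherent sign pattern across $i$'s within a single irreducible module, since a priori the $\pm$ choices on different $i$-summands look independent; resolving this requires tracking the action of $\sigma$ on the intertwining operators (equivalently, on the fusion generators $e(-1)^{i}\cdot$ and $f(-1)^{i}\cdot$) that glue the summands of the extension, and this is precisely what produces the $i\bmod 4$ grouping in the statement.
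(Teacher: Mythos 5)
You should know at the outset that the paper does not prove Theorem \ref{thm:affinetwist'}: it is imported verbatim from \cite{JW4}, so there is no in-paper argument to compare yours against. Judged on its own, your outline is consistent with the strategy that \cite{JW4} and the surrounding text actually rely on --- the identification $\sigma=e^{2\pi\sqrt{-1}\,h''(0)}$, the reduction to a $\mathbb{Z}/2$-orbifold of $L^{even}_{\hat{g}}(k,0)$, Li's $\Delta$-operator for the twisted families, and the eigenspace decompositions of Lemmas \ref{decomp.} and \ref{twisted}. Your diagnosis of the $i\bmod 4$ sign pattern as the crux is right, and it can be settled more cheaply than by chasing intertwining operators: the canonical lift of $\sigma$ to an untwisted module is literally $e^{2\pi\sqrt{-1}\,h''(0)}$, whose eigenvalue on an $h'(0)$-eigenvector of eigenvalue $j$ in the top level $U^{i}$ of $L(k,i)$ is $(\sqrt{-1})^{j}$; for even $i$ the extreme eigenvalue is $(-1)^{i/2}$, which is $+1$ for $i\in 4\mathbb{Z}$ and $-1$ for $i\in 4\mathbb{Z}+2$, and this pins down the relative signs across all summands at once (up to the convention fixing which eigenspace is called $L(k,i)^{+}$).

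Two steps in your sketch are genuinely thinner than a proof requires. First, irreducibility of each candidate and the inequivalence of the two eigenspaces of a single $\sigma$-stable module do not follow from irreducibility of the individual $L_{\hat{sl_{2}}}(k,0)^{\sigma}\otimes L^{Vir}(c_{2k+3,k+2},0)$-summands: each eigenspace is a large direct sum of such summands, so one must either invoke the Galois theory of fixed-point subalgebras (an irreducible $\sigma$-stable module splits into exactly two inequivalent irreducible $V^{\sigma}$-modules) or the simple-current-extension machinery of \cite{CKL}, \cite{CKM}, \cite{CKLR} that this paper uses for the analogous step in Theorem \ref{thm:irr para}; comparing top-level $L(0)$-weights cannot separate the $\pm$ pair, since those summands have identical weights. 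Second, your completeness argument is asserted rather than derived: the count you would match against is exactly what needs to be established. The standard route --- and the one this paper takes for $K(\g,k)^{\sigma}$ --- is the result of \cite{KM} (with the regularity input of \cite{M2}, \cite{CM}) that every irreducible module of the fixed-point subalgebra occurs inside an untwisted or $\sigma$-twisted module of $L^{even}_{\hat{g}}(k,0)$, which reduces completeness to the already-known classification of those. One must also rule out accidental coincidences among the $2k+2$ values of $r$ arising from the Virasoro symmetry $V_{i+1,r}\cong V_{k+1-i,2k+3-r}$ (a symmetry the paper does have to confront in its own Theorem \ref{thm:irr para}); a weight comparison alone does not address this.
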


 \begin{rmk}\label{rmk:affine orbifold} With the notations in Theorem \ref{thm:affinetwist'}, we call $M_{r}^{even,\pm}, M_{r}^{odd,1}, M_{r}^{odd,2}$ untwisted type modules and  $\overline{M_{r}^{even,\pm}}, \overline{M_{r}^{odd,1}}, \overline{M_{r}^{odd,2}}$ twisted type modules respectively.
\end{rmk}

 For simplicity, we denote untwisted type modules $M_{r}^{odd,1}$ by $M_{r}^{odd,+}$ and $M_{r}^{odd,2}$ by $M_{r}^{odd,-}$. We denote twisted type modules $\overline{M_{r}^{odd,1}}$ by $\overline{M_{r}^{odd,+}}$ and $\overline{M_{r}^{odd,2}}$ by $\overline{M_{r}^{odd,-}}$.

 \begin{thm}\label{thm:irr para}
  (1) If $k=2n+1$, $n\geq 1$, there are $4(k+1)(n+4)$ inequivalent irreducible modules of $K(\g,k)^{\sigma}$, the list is

  \begin{eqnarray}\label{untwistedmod1.}
 U_{r}^{even,k,s}=\sum_{\tiny{\begin{split}i^{'}=2\ \ \  \\ i^{'}\in 2\mathbb{Z}\ \ \  \\ \ \ \ \ i^{'}\geq i-2j  \end{split}}}^{k}M^{i^{'},\frac{i^{'}-i+2j}{2}}\otimes V_{i^{'}+1,r}+\sum_{\tiny{\begin{split}i^{''}=0\ \ \ \ \   \\ i^{''}\in 2\mathbb{Z}\ \ \ \ \ \\  i^{''}\leq i-2j-2 \\ i-2j\leq k-1 \end{split}}}^{k}M^{i^{''},\frac{i^{''}+i-2j}{2}}\otimes V_{i^{''}+1,r}\end{eqnarray}
 for $2\leq i\leq k, i\in 2\mathbb{Z}, 0\leq j\leq \frac{i}{2}-1$ and $s=i-2j=2,4,\cdots,2n$, $1\leq r\leq 2k+2$.

  \begin{eqnarray}\label{untwistedmod2.}
 S^{even,k,1}_{r}=\bigoplus_{\tiny{\begin{split} i=0 \ \ \ \\ i\in 4\mathbb{Z} \ \ \end{split}}}^{k}(M^{i,\frac{i}{2}})^{+}\otimes V_{i+1,r}\oplus\bigoplus_{\tiny{\begin{split}i=0\ \ \  \\ i\in 4\mathbb{Z}+2 \end{split}}}^{k}(M^{i,\frac{i}{2}})^{-}\otimes V_{i+1,r}\end{eqnarray}
for $1\leq r\leq 2k+2$.
 \begin{eqnarray}\label{untwistedmod2'.}
 S^{even,k,2}_{r}=\bigoplus_{\tiny{\begin{split} i=0 \ \ \ \\ i\in 4\mathbb{Z} \ \ \end{split}}}^{k}(M^{i,\frac{i}{2}})^{-}\otimes V_{i+1,r}\oplus\bigoplus_{\tiny{\begin{split}i=0\ \ \  \\ i\in 4\mathbb{Z}+2 \end{split}}}^{k}(M^{i,\frac{i}{2}})^{+}\otimes V_{i+1,r}\end{eqnarray}
for $1\leq r\leq 2k+2$.
 \begin{eqnarray}\label{untwistedmod3.}
 U_{r}^{odd,k,s}=\sum_{\tiny{\begin{split}i^{'}=1\ \ \  \\ \ \ \ \ i^{'}\in 2\mathbb{Z}+1 \\ \ \ \ \   i^{'}\geq i-2j  \end{split}}}^{k}M^{i^{'},\frac{i^{'}-i+2j}{2}}\otimes V_{i^{'}+1,r}+\sum_{\tiny{\begin{split}i^{''}=0\ \ \ \ \   \\ \ \ \ i^{''}\in 2\mathbb{Z}+1\ \ \ \ \ \\ \ \ \  i^{''}\leq i-2j \leq k \end{split}}}^{k}M^{i^{''},\frac{i^{''}+i-2j}{2}}\otimes V_{i^{''}+1,r}\end{eqnarray}
 for $1\leq i\leq k, i\in 2\mathbb{Z}+1, 0\leq j\leq \frac{i-1}{2}, (i,j)\neq(k,0)$ and $s=i-2j=1,3,\cdots,2n+1$, $1\leq r\leq 2k+2$.

  \begin{eqnarray}\label{untwistedmod4.}
 S^{odd,k,1}_{r}=\bigoplus_{\tiny{\begin{split} i=0 \ \ \ \ \ \\ i\in 4\mathbb{Z}+1 \ \ \end{split}}}^{k}(M^{k-i,\frac{k-i}{2}})^{+}\otimes V_{i+1,r}\oplus\bigoplus_{\tiny{\begin{split}i=0\ \ \  \\ i\in 4\mathbb{Z}+3 \end{split}}}^{k}(M^{k-i,\frac{k-i}{2}})^{-}\otimes V_{i+1,r}\end{eqnarray}
for $1\leq r\leq 2k+2$.
 \begin{eqnarray}\label{untwistedmod4'.}
 S^{odd,k,2}_{r}=\bigoplus_{\tiny{\begin{split} i=0 \ \ \ \ \ \\ i\in 4\mathbb{Z}+1 \ \ \end{split}}}^{k}(M^{k-i,\frac{k-i}{2}})^{-}\otimes V_{i+1,r}\oplus\bigoplus_{\tiny{\begin{split}i=0\ \ \  \\ i\in 4\mathbb{Z}+3 \end{split}}}^{k}(M^{k-i,\frac{k-i}{2}})^{+}\otimes V_{i+1,r}\end{eqnarray}
 for $1\leq r\leq 2k+2$.
  \begin{eqnarray}\label{twistedmod5.}
 \overline{U^{even,k,1}_{r}}=\bigoplus_{\tiny{\begin{split} i=0 \ \ \ \\ i\in 4\mathbb{Z} \ \ \end{split}}}^{k}W(k,i)^{+}\otimes V_{i+1,r}\oplus\bigoplus_{\tiny{\begin{split}i=0\ \ \  \\ i\in 4\mathbb{Z}+2 \end{split}}}^{k}W(k,i)^{-}\otimes V_{i+1,r}\end{eqnarray}
for $1\leq r\leq 2k+2$.
 \begin{eqnarray}\label{twistedmod5'.}
  \overline{U^{even,k,2}_{r}}=\bigoplus_{\tiny{\begin{split} i=0 \ \ \ \\ i\in 4\mathbb{Z} \ \ \end{split}}}^{k}W(k,i)^{-}\otimes V_{i+1,r}\oplus\bigoplus_{\tiny{\begin{split}i=0\ \ \  \\ i\in 4\mathbb{Z}+2 \end{split}}}^{k}W(k,i)^{+}\otimes V_{i+1,r}\end{eqnarray}
 for $1\leq r\leq 2k+2$.
  \begin{eqnarray}\label{twistedmod6.}
 \overline{U^{odd,k,1}_{r}}=\bigoplus_{\tiny{\begin{split} i=0 \ \ \ \\ i\in 4\mathbb{Z}+1 \ \end{split}}}^{k}W(k,i)^{+}\otimes V_{i+1,r}\oplus\bigoplus_{\tiny{\begin{split}i=0\ \ \  \\ i\in 4\mathbb{Z}+3 \end{split}}}^{k}W(k,i)^{-}\otimes V_{i+1,r}\end{eqnarray}
for $1\leq r\leq 2k+2$.
 \begin{eqnarray}\label{twistedmod6'.}
  \overline{U^{odd,k,2}_{r}}=\bigoplus_{\tiny{\begin{split} i=0 \ \ \ \\ i\in 4\mathbb{Z}+1  \ \end{split}}}^{k}W(k,i)^{-}\otimes V_{i+1,r}\oplus\bigoplus_{\tiny{\begin{split}i=0\ \ \  \\ i\in 4\mathbb{Z}+3 \end{split}}}^{k}W(k,i)^{+}\otimes V_{i+1,r}\end{eqnarray}
 for $1\leq r\leq 2k+2$.

(2) If $k=4n$, $n\geq 1$, there are $2(k+1)(4n+3)$ inequivalent irreducible modules of $K(\g,k)^{\sigma}$, the list is

  \begin{eqnarray}\label{untwistedmod7.}
 U_{r}^{even,k,s}=\sum_{\tiny{\begin{split}i^{'}=2\ \ \  \\ i^{'}\in 2\mathbb{Z}\ \ \  \\ \ \ \ \ i^{'}\geq i-2j  \end{split}}}^{k}M^{i^{'},\frac{i^{'}-i+2j}{2}}\otimes V_{i^{'}+1,r}+\sum_{\tiny{\begin{split}i^{''}=0\ \ \ \ \   \\ i^{''}\in 2\mathbb{Z}\ \ \ \ \ \\  i^{''}\leq i-2j-2 \\ i-2j\leq k-2 \end{split}}}^{k}M^{i^{''},\frac{i^{''}+i-2j}{2}}\otimes V_{i^{''}+1,r}\end{eqnarray}
 for $2\leq i\leq k, i\in 2\mathbb{Z}, 0\leq j\leq \frac{i}{2}-1, (i,j)\neq(k,0)$ and $s=i-2j=2,4,6\cdots,2(2n-1)$, $1\leq r\leq 2k+2$.

 \begin{eqnarray}\label{untwistedmod7'.}
 U_{r}^{odd,k,s}=\sum_{\tiny{\begin{split}i^{'}=1\ \ \  \\ \ \ \ \ i^{'}\in 2\mathbb{Z}+1 \\ \ \ \ \   i^{'}\geq i-2j  \end{split}}}^{k}M^{i^{'},\frac{i^{'}-i+2j}{2}}\otimes V_{i^{'}+1,r}+\sum_{\tiny{\begin{split}i^{''}=0\ \ \ \ \   \\ \ \ \ i^{''}\in 2\mathbb{Z}+1\ \ \ \ \ \\ \ \ \  i^{''}\leq i-2j \leq k-1 \end{split}}}^{k}M^{i^{''},\frac{i^{''}+i-2j}{2}}\otimes V_{i^{''}+1,r}\end{eqnarray}
 for $1\leq i\leq k, i\in 2\mathbb{Z}+1, 0\leq j\leq \frac{i-1}{2},$ and $s=i-2j=1,3,\cdots,2(2n)-1$, $1\leq r\leq 2k+2$.

  \begin{eqnarray}\label{untwistedmod8.}
 S^{even,k,1}_{r}=\bigoplus_{\tiny{\begin{split} i=0 \ \ \ \\ i\in 4\mathbb{Z} \ \ \end{split}}}^{k}(M^{i,\frac{i}{2}})^{+}\otimes V_{i+1,r}\oplus\bigoplus_{\tiny{\begin{split}i=0\ \ \  \\ i\in 4\mathbb{Z}+2 \end{split}}}^{k}(M^{i,\frac{i}{2}})^{-}\otimes V_{i+1,r}.\end{eqnarray}

 \begin{eqnarray}\label{untwistedmod8'.}
 S^{even,k,2}_{r}=\bigoplus_{\tiny{\begin{split} i=0 \ \ \ \\ i\in 4\mathbb{Z} \ \ \end{split}}}^{k}(M^{i,\frac{i}{2}})^{-}\otimes V_{i+1,r}\oplus\bigoplus_{\tiny{\begin{split}i=0\ \ \  \\ i\in 4\mathbb{Z}+2 \end{split}}}^{k}(M^{i,\frac{i}{2}})^{+}\otimes V_{i+1,r}.\end{eqnarray}
 for $1\leq r\leq 2k+2$.

 \begin{eqnarray}\label{untwistedmod10.}
 \overline{U^{odd,k}_{r}}=\bigoplus_{\tiny{\begin{split} i=0 \ \ \ \\ i\in 4\mathbb{Z}+1 \ \end{split}}}^{k}W(k,i)^{+}\otimes V_{i+1,r}\oplus\bigoplus_{\tiny{\begin{split}i=0\ \ \  \\ i\in 4\mathbb{Z}+3 \end{split}}}^{k}W(k,i)^{-}\otimes V_{i+1,r}\end{eqnarray}
for $1\leq r\leq 2k+2$.

 (i) If $\frac{k}{2}\in 4\mathbb{Z}$,

 \begin{eqnarray}\label{untwistedmod9.}
 \overline{U^{even,k,1}_{r}}=\bigoplus_{\tiny{\begin{split} i=0 \ \ \ \\ i\in 4\mathbb{Z} \ \ \\ i\neq \frac{k}{2} \ \ \end{split}}}^{k}W(k,i)^{+}\otimes V_{i+1,r}\oplus\big(W(k,\frac{k}{2})^{+}\oplus \widetilde{W(k,\frac{k}{2}}^{-})\big)\otimes V_{\frac{k}{2}+1,r}\oplus\bigoplus_{\tiny{\begin{split}i=0\ \ \  \\ i\in 4\mathbb{Z}+2 \end{split}}}^{k}W(k,i)^{-}\otimes V_{i+1,r}.\end{eqnarray}
 for $1\leq r\leq k+1$.

 \begin{eqnarray}\label{untwistedmod9'.}
   \overline{U^{even,k,2}_{r}}=\bigoplus_{\tiny{\begin{split} i=0 \ \ \ \\ i\in 4\mathbb{Z} \ \ \\ i\neq \frac{k}{2} \ \ \end{split}}}^{k}W(k,i)^{-}\otimes V_{i+1,r}\oplus\big(W(k,\frac{k}{2})^{-}\oplus \widetilde{W(k,\frac{k}{2}}^{+})\big)\otimes V_{\frac{k}{2}+1,r}\oplus\bigoplus_{\tiny{\begin{split}i=0\ \ \  \\ i\in 4\mathbb{Z}+2 \end{split}}}^{k}W(k,i)^{+}\otimes V_{i+1,r}.\end{eqnarray}
 for $1\leq r\leq k+1$.

  (ii) If $\frac{k}{2}\in 4\mathbb{Z}+2$,

 \begin{eqnarray}\label{untwistedmod11.}
 \overline{U^{even,k,1}_{r}}=\bigoplus_{\tiny{\begin{split} i=0 \ \ \ \\ i\in 4\mathbb{Z} \ \  \end{split}}}^{k}W(k,i)^{+}\otimes V_{i+1,r}\oplus\big(W(k,\frac{k}{2})^{+}\oplus \widetilde{W(k,\frac{k}{2}}^{-})\big)\otimes V_{\frac{k}{2}+1,r}\oplus\bigoplus_{\tiny{\begin{split}i=0\ \ \  \\ i\in 4\mathbb{Z}+2 \\ i\neq \frac{k}{2} \ \ \end{split}}}^{k}W(k,i)^{-}\otimes V_{i+1,r}.\end{eqnarray}
 for $1\leq r\leq k+1$.

 \begin{eqnarray}\label{untwistedmod12'.}
   \overline{U^{even,k,2}_{r}}=\bigoplus_{\tiny{\begin{split} i=0 \ \ \ \\ i\in 4\mathbb{Z} \ \ \end{split}}}^{k}W(k,i)^{-}\otimes V_{i+1,r}\oplus\big(W(k,\frac{k}{2})^{-}\oplus \widetilde{W(k,\frac{k}{2}}^{+})\big)\otimes V_{\frac{k}{2}+1,r}\oplus\bigoplus_{\tiny{\begin{split}i=0\ \ \  \\ i\in 4\mathbb{Z}+2 \\ i\neq \frac{k}{2} \ \ \end{split}}}^{k}W(k,i)^{+}\otimes V_{i+1,r}.\end{eqnarray}
 for $1\leq r\leq k+1$.

(3) If $k=4n+2$, $n\geq 1$, there are $2(k+1)(4n+7)$ inequivalent irreducible modules of $K(\g,k)^{\sigma}$, the list is

  \begin{eqnarray}\label{untwistedmod13.}
 U_{r}^{even,k,s}=\sum_{\tiny{\begin{split}i^{'}=2\ \ \  \\ i^{'}\in 2\mathbb{Z}\ \ \  \\ \ \ \ \ i^{'}\geq i-2j  \end{split}}}^{k}M^{i^{'},\frac{i^{'}-i+2j}{2}}\otimes V_{i^{'}+1,r}+\sum_{\tiny{\begin{split}i^{''}=0\ \ \ \ \   \\ i^{''}\in 2\mathbb{Z}\ \ \ \ \ \\  i^{''}\leq i-2j-2 \\ i-2j\leq k-2 \end{split}}}^{k}M^{i^{''},\frac{i^{''}+i-2j}{2}}\otimes V_{i^{''}+1,r}\end{eqnarray}
 for $2\leq i\leq k, i\in 2\mathbb{Z}, 0\leq j\leq \frac{i}{2}-1, (i,j)\neq(k,0)$ and $s=i-2j=2,4,\cdots,2(2n)$, $1\leq r\leq 2k+2$.
\begin{eqnarray}\label{untwistedmod13'.}
 U_{r}^{odd,k,s}=\sum_{\tiny{\begin{split}i^{'}=1\ \ \  \\ \ \ \ \ i^{'}\in 2\mathbb{Z}+1 \\ \ \ \ \   i^{'}\geq i-2j  \end{split}}}^{k}M^{i^{'},\frac{i^{'}-i+2j}{2}}\otimes V_{i^{'}+1,r}+\sum_{\tiny{\begin{split}i^{''}=0\ \ \ \ \   \\ \ \ \ i^{''}\in 2\mathbb{Z}+1\ \ \ \ \ \\ \ \ \  i^{''}\leq i-2j \leq k-1 \end{split}}}^{k}M^{i^{''},\frac{i^{''}+i-2j}{2}}\otimes V_{i^{''}+1,r}\end{eqnarray}
 for $1\leq i\leq k, i\in 2\mathbb{Z}+1, 0\leq j\leq \frac{i-1}{2},$ and $s=i-2j=1,3,5\cdots,2(2n+1)-1$, $1\leq r\leq 2k+2$.

  \begin{eqnarray}\label{untwistedmod14.}
 S^{even,k,1}_{r}=\bigoplus_{\tiny{\begin{split} i=0 \ \ \ \\ i\in 4\mathbb{Z} \ \ \end{split}}}^{k}(M^{i,\frac{i}{2}})^{+}\otimes V_{i+1,r}\oplus\bigoplus_{\tiny{\begin{split}i=0\ \ \  \\ i\in 4\mathbb{Z}+2 \end{split}}}^{k}(M^{i,\frac{i}{2}})^{-}\otimes V_{i+1,r}\end{eqnarray}
 for $1\leq r\leq 2k+2$.

 \begin{eqnarray}\label{untwistedmod14'.}
 S^{even,k,2}_{r}=\bigoplus_{\tiny{\begin{split} i=0 \ \ \ \\ i\in 4\mathbb{Z} \ \ \end{split}}}^{k}(M^{i,\frac{i}{2}})^{-}\otimes V_{i+1,r}\oplus\bigoplus_{\tiny{\begin{split}i=0\ \ \  \\ i\in 4\mathbb{Z}+2 \end{split}}}^{k}(M^{i,\frac{i}{2}})^{+}\otimes V_{i+1,r}\end{eqnarray}
 for $1\leq r\leq 2k+2$.

  \begin{eqnarray}\label{untwistedmod15.}
 S^{even,k,3}_{r}=\bigoplus_{\tiny{\begin{split} i=0 \ \ \ \\ i\in 4\mathbb{Z} \ \ \end{split}}}^{k}(M^{k-i,\frac{k-i}{2}})^{+}\otimes V_{i+1,r}\oplus\bigoplus_{\tiny{\begin{split}i=0\ \ \  \\ i\in 4\mathbb{Z}+2 \end{split}}}^{k}(M^{k-i,\frac{k-i}{2}})^{-}\otimes V_{i+1,r}\end{eqnarray}
 for $1\leq r\leq 2k+2$.
 \begin{eqnarray}\label{untwistedmod16'.}
 S^{even,k,4}_{r}=\bigoplus_{\tiny{\begin{split} i=0 \ \ \ \\ i\in 4\mathbb{Z} \ \ \end{split}}}^{k}(M^{k-i,\frac{k-i}{2}})^{-}\otimes V_{i+1,r}\oplus\bigoplus_{\tiny{\begin{split}i=0\ \ \  \\ i\in 4\mathbb{Z}+2 \end{split}}}^{k}(M^{k-i,\frac{k-i}{2}})^{+}\otimes V_{i+1,r}.\end{eqnarray}
 for $1\leq r\leq 2k+2$.

 \begin{eqnarray}\label{untwistedmod17.}
 \overline{U^{even,k}_{r}}=\bigoplus_{\tiny{\begin{split} i=0 \ \ \ \\ i\in 4\mathbb{Z} \ \end{split}}}^{k}W(k,i)^{+}\otimes V_{i+1,r}\oplus\bigoplus_{\tiny{\begin{split}i=0\ \ \  \\ i\in 4\mathbb{Z}+2 \end{split}}}^{k}W(k,i)^{-}\otimes V_{i+1,r}\end{eqnarray}
for $1\leq r\leq 2k+2$.

 (i) If $\frac{k}{2}\in 4\mathbb{Z}+1$,

 \begin{eqnarray*}
 \overline{U^{odd,k,1}_{r}}=\bigoplus_{\tiny{\begin{split} i=0 \ \ \ \\ i\in 4\mathbb{Z}+1 \ \ \\ i\neq \frac{k}{2} \ \ \end{split}}}^{k}W(k,i)^{+}\otimes V_{i+1,r}\oplus\big(W(k,\frac{k}{2})^{+}\oplus \widetilde{W(k,\frac{k}{2}}^{-})\big)\otimes V_{\frac{k}{2}+1,r}\oplus\bigoplus_{\tiny{\begin{split}i=0\ \ \  \\ i\in 4\mathbb{Z}+3 \end{split}}}^{k}W(k,i)^{-}\otimes V_{i+1,r}.\end{eqnarray*}
 for $1\leq r\leq k+1$.

 \begin{eqnarray*}
   \overline{U^{odd,k,2}_{r}}=\bigoplus_{\tiny{\begin{split} i=0 \ \ \ \ \\ i\in 4\mathbb{Z}+1 \ \ \\ i\neq \frac{k}{2} \ \ \end{split}}}^{k}W(k,i)^{-}\otimes V_{i+1,r}\oplus\big(W(k,\frac{k}{2})^{-}\oplus \widetilde{W(k,\frac{k}{2}}^{+})\big)\otimes V_{\frac{k}{2}+1,r}\oplus\bigoplus_{\tiny{\begin{split}i=0\ \ \  \\ i\in 4\mathbb{Z}+3 \end{split}}}^{k}W(k,i)^{+}\otimes V_{i+1,r}.\end{eqnarray*}
 for $1\leq r\leq k+1$.

  (ii) If $\frac{k}{2}\in 4\mathbb{Z}+2$,

 \begin{eqnarray*}
 \overline{U^{odd,k,1}_{r}}=\bigoplus_{\tiny{\begin{split} i=0 \ \ \ \ \ \\ i\in 4\mathbb{Z}+1 \ \ \  \end{split}}}^{k}W(k,i)^{+}\otimes V_{i+1,r}\oplus\big(W(k,\frac{k}{2})^{+}\oplus \widetilde{W(k,\frac{k}{2}}^{-})\big)\otimes V_{\frac{k}{2}+1,r}
 \oplus\bigoplus_{\tiny{\begin{split}i=0\ \ \  \\ i\in 4\mathbb{Z}+3 \\ i\neq \frac{k}{2} \ \ \end{split}}}^{k}W(k,i)^{-}\otimes V_{i+1,r}.\end{eqnarray*}
 for $1\leq r\leq k+1$.

 \begin{eqnarray*}
   \overline{U^{odd,k,2}_{r}}=\bigoplus_{\tiny{\begin{split} i=0 \ \ \ \ \ \\ i\in 4\mathbb{Z}+1 \ \ \end{split}}}^{k}W(k,i)^{-}\otimes V_{i+1,r}\oplus\big(W(k,\frac{k}{2})^{-}\oplus \widetilde{W(k,\frac{k}{2}}^{+})\big)\otimes V_{\frac{k}{2}+1,r}\oplus\bigoplus_{\tiny{\begin{split}i=0\ \ \  \\ i\in 4\mathbb{Z}+3 \\ i\neq \frac{k}{2} \ \ \end{split}}}^{k}W(k,i)^{+}\otimes V_{i+1,r}.\end{eqnarray*}
 for $1\leq r\leq k+1$.

\end{thm}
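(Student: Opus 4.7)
The plan is to pull the classification of irreducible $L_{\hat{\g}}(k,0)^{\sigma}$-modules given in Theorem \ref{thm:affinetwist'} back through the conformal embedding
\[
V_{\mathbb{Z}\gamma}^{+}\otimes K(\g,k)^{\sigma}\hookrightarrow L_{\hat{\g}}(k,0)^{\sigma},
\]
whose mutual commutants, as recalled in the introduction, are $K(\g,k)^{\sigma}$ and $V_{\mathbb{Z}\gamma}^{+}$. Since $L_{\hat{\g}}(k,0)^{\sigma}$ is $C_{2}$-cofinite and rational and the embedded pair is a rational conformal subalgebra, every irreducible $L_{\hat{\g}}(k,0)^{\sigma}$-module decomposes into a finite direct sum of tensor products of irreducible $V_{\mathbb{Z}\gamma}^{+}$-modules with irreducible $K(\g,k)^{\sigma}$-modules, and by the coset principle every irreducible $K(\g,k)^{\sigma}$-module occurs as such an isotypic component in at least one of the eight parent families $M_{r}^{even,\pm}$, $M_{r}^{odd,\pm}$, $\overline{M_{r}^{even,\pm}}$, $\overline{M_{r}^{odd,\pm}}$.

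The first concrete step is to substitute the decompositions of $L(k,i)^{\pm}$ and $\overline{L(k,i)}^{\pm}$ as $V_{\mathbb{Z}\gamma}^{+}\otimes K^{+}$-modules from Lemma \ref{decomp.} and Lemma \ref{twisted} into each of the eight families of Theorem \ref{thm:affinetwist'}. For a fixed irreducible $V_{\mathbb{Z}\gamma}^{+}$-module $N$ taken from Proposition \ref{pro:orbifoldlattice}, the $N$-isotypic component of a given parent takes the shape $N\otimes\bigl(\bigoplus X\otimes V_{i+1,r}\bigr)$ with $X$ running through irreducible $K^{+}$-modules, and the inner sum is a candidate irreducible $K(\g,k)^{\sigma}$-module. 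Using the $V_{\mathbb{Z}\gamma}^{+}$-isomorphisms $V_{\mathbb{Z}\gamma+\frac{a}{2k}\gamma}\cong V_{\mathbb{Z}\gamma-\frac{a}{2k}\gamma}$ together with the $K^{+}$-isomorphisms $M^{i,j}\cong M^{k-i,j-i}\cong M^{i,i+k-j}$ (the identifications already exploited in the proof of Lemma \ref{decomp.}), I would collect and re-index these isotypic components across the twisted and untwisted parents, producing the candidate lists in the three cases $k=2n+1$, $k=4n$, and $k=4n+2$. The trichotomy is forced because the parity of $i$ and of $k$ determines which of the irreducible lattice modules $V_{\mathbb{Z}\gamma}^{\pm}$, $V_{\mathbb{Z}\gamma+\frac{1}{2}\gamma}^{\pm}$, $V_{\mathbb{Z}\gamma+\frac{i}{2k}}$, $V_{\mathbb{Z}\gamma}^{T_{1},\pm}$, $V_{\mathbb{Z}\gamma}^{T_{2},\pm}$ appears in Lemmas \ref{decomp.} and \ref{twisted}.

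The main obstacle, flagged in the introduction, is to verify that the candidates produced from different parents, or from different $N$-isotypic components within the same parent, are pairwise inequivalent as $K(\g,k)^{\sigma}$-modules. I would separate them by two invariants. First, the conformal weight of the lowest graded piece, computed from the known conformal weights of the factors $M^{i,j}$, $(M^{i,\frac{i}{2}})^{\pm}$, $W(k,i)^{\pm}$, $\widetilde{W(k,\frac{k}{2})}^{\pm}$ (from \cite{JW1}) together with the minimal Virasoro weight $h_{2k+3,k+2}^{r,s}$ of $V_{r,s}$ recalled in Section 2; whenever two candidates differ in lowest conformal weight modulo $\mathbb{Z}$ they are necessarily inequivalent. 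For those candidates sharing a lowest weight, I would exploit the explicit action of the generators $\omega,\bar{\omega},W^{3},\bar{W}^{3}$ of $K(\g,k)$ on the top level, using the identities $\sigma(\omega)=\omega$, $\sigma(\bar{\omega})=\bar{\omega}$, $\sigma(W^{3})=-W^{3}$, $\sigma(\bar{W}^{3})=-\bar{W}^{3}$ recorded in Section 2 to pin down the ``$\pm$'' decoration on the twisted summands and on the type $I$ untwisted summands $(M^{i,\frac{i}{2}})^{\pm}$. Completeness of the list then follows from the coset argument of paragraph one together with the fact, established in \cite{M2}, \cite{CM}, \cite{CKLR}, that $K(\g,k)^{\sigma}$ is rational and $C_{2}$-cofinite, so that no irreducible module can be missed; self-duality of the resulting modules is inherited from the known self-duality of their $V_{\mathbb{Z}\gamma}^{+}$, $K^{+}$, and Virasoro constituents.
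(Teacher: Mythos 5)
Your overall strategy coincides with the paper's: both pass through the commutant pair $V_{\mathbb{Z}\gamma}^{+}\otimes K(\g,k)^{\sigma}\subset L_{\hat{\g}}(k,0)^{\sigma}$, substitute the branchings of Lemma \ref{decomp.} and Lemma \ref{twisted} into the eight families of Theorem \ref{thm:affinetwist'}, and read off the multiplicity spaces of the irreducible $V_{\mathbb{Z}\gamma}^{+}$-modules. Where you appeal informally to a ``coset principle,'' the paper quotes \cite{CKLR}, \cite{CKL}, \cite{CKM} for the irreducibility of these multiplicity spaces and \cite{KM} for the fact that they exhaust all irreducibles of $K(\g,k)^{\sigma}$; these citations are the actual content behind your first paragraph, so that part is acceptable as a sketch.

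The genuine gap is in your treatment of which candidates coincide. The invariants you propose (lowest conformal weights, the sign of the action of $W^{3}$ and $\bar{W}^{3}$ on top levels) can only separate non-isomorphic candidates; they cannot certify the isomorphisms that are responsible for the stated counts. The decisive identifications in the paper's argument are $W(k,i)^{\pm}\cong W(k,k-i)^{\pm}$ as $K^{+}$-modules combined with $V_{i+1,r}\cong V_{k-i+1,2k+3-r}$ as $L^{Vir}(c_{2k+3,k+2},0)$-modules; applied simultaneously to the sum over $i$, they show in case (2) that the $V_{\mathbb{Z}\gamma}^{T_{2},-}$-multiplicity space $\overline{U^{odd,k,'}_{r}}$ is isomorphic to $\overline{U^{odd,k}_{r}}$, and that the twisted-type candidates satisfy $\overline{U^{even,k,j}_{r}}\cong\overline{U^{even,k,j}_{2k+3-r}}$, so that only about half of the values $1\leq r\leq 2k+2$ give inequivalent modules in cases (2) and (3). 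Your re-indexing step only records the isomorphisms $V_{\mathbb{Z}\gamma+\frac{a}{2k}\gamma}\cong V_{\mathbb{Z}\gamma-\frac{a}{2k}\gamma}$ and $M^{i,j}\cong M^{k-i,j-i}$ internal to a single $L(k,i)^{\pm}$, and nowhere invokes the symmetry $i\mapsto k-i$, $r\mapsto 2k+3-r$ of the whole sum over $i$. Without it your list overcounts the twisted-type families by a factor of two and cannot reproduce the totals $2(k+1)(4n+3)$ and $2(k+1)(4n+7)$. This is precisely the difficulty flagged in the introduction, so the omission is material rather than cosmetic.
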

\begin{proof} From Theorem \ref{thm:affinetwist'}, we know that $M_{r}^{even,+}, M_{r}^{even,-}, M_{r}^{odd,+}, M_{r}^{odd,-}$ and $\overline{M_{r}^{even,+}}$, $\overline{M_{r}^{even,-}}$, $\overline{M_{r}^{odd,+}}, \overline{M_{r}^{odd,-}}$ for $1\leq r\leq 2k$ form a complete list of inequivalent irreducible modules of $L_{\hat{g}}(k,0)^{\sigma}$. Since Com$(V_{\mathbb{Z}\gamma},L_{\hat{sl_2}}(k,0))=K(sl_2,k)$ and Com$(K(sl_2,k),L_{\hat{sl_2}}(k,0))=V_{\mathbb{Z}\gamma}$ \cite{DLY2}, we deduce that
Com$(V_{\mathbb{Z}\gamma}^{\sigma},L_{\hat{sl_2}}(k,0)^{\sigma})=K(sl_2,k)^{\sigma}$ and Com$(K(sl_2,k)^{\sigma},L_{\hat{sl_2}}(k,0)^{\sigma})=V_{\mathbb{Z}\gamma}^{\sigma}$, i.e., Com$(V_{\mathbb{Z}\gamma}^{+},L(k,0)^{+})=K^{+}$ and Com$(K^{+},L(k,0)^{+})=V_{\mathbb{Z}\gamma}^{+}$. And note that
\begin{eqnarray*} \mbox{Com}(V_{\mathbb{Z}\gamma},L_{\hat{\g}}(k,0))=K(\g,k),\ \  \mbox{Com} (K(\g,k),L_{\hat{\g}}(k,0))=V_{\mathbb{Z}\gamma}.\end{eqnarray*}
Thus we have \begin{eqnarray*} \mbox{Com}(V_{\mathbb{Z}\gamma}^{+},L_{\hat{\g}}(k,0)^{\sigma})=K(\g,k)^{\sigma},\ \  \mbox{Com} (K(\g,k)^{\sigma},L_{\hat{\g}}(k,0)^{\sigma})=V_{\mathbb{Z}\gamma}^{+}.\end{eqnarray*}
For (1), if $k=2n+1$, $n\geq 1$, from (\ref{mod1.}), (\ref{affineorbifold1}) and (\ref{affineorbifold2}), we have

\begin{eqnarray*}
\begin{split}M_{r}^{even,+}&=\bigoplus_{i=0, \ i\in 4\mathbb{Z}}^{k}L(k,i)^{+}\otimes V_{i+1,r}\bigoplus_{i=0, \ i\in 4\mathbb{Z}+2}^{k}L(k,i)^{-}\otimes V_{i+1,r}\\
&=\bigoplus_{i=0, \ i\in 4\mathbb{Z}}^{k} \big(\oplus_{0\leq j\leq \frac{i}{2}-1}V_{\mathbb{Z}\gamma+\frac{i-2j}{2k}\gamma}\otimes M^{i,j}\oplus V_{\mathbb{Z}\gamma}^{+}\otimes (M^{i,\frac{i}{2}})^{+}\oplus V_{\mathbb{Z}\gamma}^{-}\otimes (M^{i,\frac{i}{2}})^{-}\\
&\oplus(\oplus_{1\leq j\leq\frac{k-i-1}{2}}V_{\mathbb{Z}\gamma+\frac{-i-2j}{2k}\gamma}\otimes M^{i,i+j})\big)\otimes V_{i+1,r} \bigoplus_{i=0, \ i\in 4\mathbb{Z}+2}^{k}
\big(\oplus_{0\leq j\leq \frac{i}{2}-1}V_{\mathbb{Z}\gamma+\frac{i-2j}{2k}\gamma}\otimes M^{i,j}\\
&\oplus V_{\mathbb{Z}\gamma}^{+}\otimes (M^{i,\frac{i}{2}})^{-}\oplus V_{\mathbb{Z}\gamma}^{-}\otimes (M^{i,\frac{i}{2}})^{+}\oplus(\oplus_{1\leq j\leq\frac{k-i-1}{2}}V_{\mathbb{Z}\gamma+\frac{-i-2j}{2k}\gamma}\otimes M^{i,i+j})\big)\otimes V_{i+1,r},
\end{split}
\end{eqnarray*}
for $1\leq r\leq 2k+2$, where in the second equality, we use the decomposition of $L(k,i)^{\pm}$ as $V_{\mathbb{Z}\gamma}^{+}\otimes K^{+}$-modules. We see that if $k=2n+1$, there are $n$ different $s=i-2j$ for $2\leq i\leq k, i\in 2\mathbb{Z}, 0\leq j\leq\frac{i}{2}-1$. Note that $V_{\mathbb{Z}\gamma}+\frac{i}{2k}\gamma\cong V_{\mathbb{Z}\gamma}-\frac{i}{2k}\gamma$ for $1\leq i\leq k-1$ as $V_{\mathbb{Z}\gamma}^{+}$-modules, thus if we fix $s$, the multiplicity space of the irreducible $V_{\mathbb{Z}\gamma}^{+}$-module $V_{\mathbb{Z}\gamma+\frac{i-2j}{2k}\gamma}$ is $U_{r}^{even,k,s}$ for $1\leq r\leq 2k+2$. The multiplicity space of $V_{\mathbb{Z}\gamma}^{+}$ is $S^{even,k,1}_{r}$ for $1\leq r\leq 2k+2$. The multiplicity space of $V_{\mathbb{Z}\gamma}^{-}$ is $S^{even,k,2}_{r}$ for $1\leq r\leq 2k+2$.  We can obtain the same multiplicity space, i.e., irreducible modules of $K(\g,k)^{\sigma}$ from the decomposition of $M_{r}^{even,-}$. Similarly, we can obtain (\ref{untwistedmod3.})-(\ref{untwistedmod4'.}) from the decomposition of $M_{r}^{odd,\pm}$. From \cite{CKLR}, \cite{CKL}, \cite{CKM}, we know that $U_{r}^{even,k,s}$, $S^{even,k,1}_{r}$, $S^{even,k,2}_{r}$, $U_{r}^{odd,k,s}$, $S^{odd,k,1}_{r}$, $S^{odd,k,2}_{r}$ for $1\leq r\leq 2k+2$ are irreducible modules of $K(\g,k)^{\sigma}$.

For (\ref{twistedmod5.}) and (\ref{twistedmod5'.}), from (\ref{mod3.}), (\ref{twistedmod1}), (\ref{twistedmod2}), we have

\begin{eqnarray*}
\begin{split}\overline{M_{r}^{even,+}}&=\bigoplus_{i=0, \ i\in 4\mathbb{Z}}^{k}\overline{L(k,i)}^{+}\otimes V_{i+1,r}\bigoplus_{i=0, \ i\in 4\mathbb{Z}+2}^{k}\overline{L(k,i)}^{-}\otimes V_{i+1,r}\\
&=\bigoplus_{i=0, \ i\in 4\mathbb{Z}}^{k}\big(V_{\mathbb{Z}\gamma}^{T_{1},+}\otimes W(k,i)^{+}\oplus V_{\mathbb{Z}\gamma}^{T_{1},-}\otimes W(k,i)^{-}\big)\otimes V_{i+1,r}\\
& \bigoplus_{i=0, \ i\in 4\mathbb{Z}+2}^{k}\big(V_{\mathbb{Z}\gamma}^{T_{1},-}\otimes W(k,i)^{+}\oplus V_{\mathbb{Z}\gamma}^{T_{1},+}\otimes W(k,i)^{-}\big)\otimes V_{i+1,r},
\end{split}
\end{eqnarray*}
for $1\leq r\leq 2k+2$, where in the second equality, we use the decomposition of $\overline{L(k,i)^{\pm}}$ as $V_{\mathbb{Z}\gamma}^{+}\otimes K^{+}$-modules. The multiplicity spaces of $V_{\mathbb{Z}\gamma}^{T_{1},+}$ are $\overline{U^{even,k,1}_{r}}$ for $1\leq r\leq 2k+2$. The multiplicity spaces of $V_{\mathbb{Z}\gamma}^{T_{1},-}$ are $\overline{U^{even,k,2}_{r}}$ for $1\leq r\leq 2k+2$. We can obtain the same multiplicity space, i.e., modules of $K(\g,k)^{\sigma}$ from the decomposition of $\overline{M_{r}^{even,-}}$. Similarly, we can obtain (\ref{twistedmod6.}) and (\ref{twistedmod6'.}) from the decomposition of $\overline{M_{r}^{odd,\pm}}$. From \cite{CKLR}, \cite{CKL}, \cite{CKM}, we know that $\overline{U^{even,k,1}_{r}}$, $\overline{U^{even,k,2}_{r}}$ for $1\leq r\leq 2k$ are irreducible modules of $K(\g,k)^{\sigma}$. From \cite{KM}, we know that (\ref{untwistedmod1.})-(\ref{twistedmod6'.}) exhaust all the inequivariant irreducible modules of $K(\g,k)^{\sigma}$.

For (2), if $k=4n$, $n\geq 1$, from (\ref{mod1.}) and Lemma \ref{decomp.}(4), we have

\begin{eqnarray*}
\begin{split}M_{r}^{even,+}&=\bigoplus_{i=0, \ i\in 4\mathbb{Z}}^{k}L(k,i)^{+}\otimes V_{i+1,r}\bigoplus_{i=0, \ i\in 4\mathbb{Z}+2}^{k}L(k,i)^{-}\otimes V_{i+1,r}\\
&=\bigoplus_{i=0, \ i\in 4\mathbb{Z}}^{k} \big(\oplus_{0\leq j\leq \frac{i}{2}-1}V_{\mathbb{Z}\gamma+\frac{i-2j}{2k}\gamma}\otimes M^{i,j}\oplus V_{\mathbb{Z}\gamma}^{+}\otimes (M^{i,\frac{i}{2}})^{+}\oplus V_{\mathbb{Z}\gamma}^{-}\otimes (M^{i,\frac{i}{2}})^{-}\\
&\oplus(\oplus_{1\leq j\leq\frac{k-i}{2}-1}V_{\mathbb{Z}\gamma+\frac{-i-2j}{2k}\gamma}\otimes M^{i,i+j})\oplus V_{\mathbb{Z}\gamma+\frac{1}{2}\gamma}^{+}\otimes (M^{k-i,\frac{k-i}{2}})^{+}\oplus V_{\mathbb{Z}\gamma+\frac{1}{2}\gamma}^{-}\otimes (M^{k-i,\frac{k-i}{2}})^{-}\big)\\
&\otimes V_{i+1,r}\bigoplus_{i=0, \ i\in 4\mathbb{Z}+2}^{k}
\big(\oplus_{0\leq j\leq \frac{i}{2}-1}V_{\mathbb{Z}\gamma+\frac{i-2j}{2k}\gamma}\otimes M^{i,j}\oplus V_{\mathbb{Z}\gamma}^{+}\otimes (M^{i,\frac{i}{2}})^{-}\oplus V_{\mathbb{Z}\gamma}^{-}\otimes (M^{i,\frac{i}{2}})^{+}\\
&\oplus(\oplus_{1\leq j\leq\frac{k-i}{2}-1}V_{\mathbb{Z}\gamma+\frac{-i-2j}{2k}\gamma}\otimes M^{i,i+j})\oplus V_{\mathbb{Z}\gamma+\frac{1}{2}\gamma}^{+}\otimes (M^{k-i,\frac{k-i}{2}})^{-}\oplus V_{\mathbb{Z}\gamma+\frac{1}{2}\gamma}^{-}\otimes (M^{k-i,\frac{k-i}{2}})^{+}\big)\otimes V_{i+1,r},
\end{split}
\end{eqnarray*}
for $1\leq r\leq 2k+2$, where in the second equality, we use the decomposition of $L(k,i)^{\pm}$ as $V_{\mathbb{Z}\gamma}^{+}\otimes K^{+}$-modules. We see that if $k=4n$, there are $(2n-1)$ different $s=i-2j$ for $2\leq i\leq k, i\in 2\mathbb{Z}, 0\leq j\leq\frac{i}{2}-1, (i,j)\neq (k,0)$. Note that $V_{\mathbb{Z}\gamma}+\frac{i}{2k}\gamma\cong V_{\mathbb{Z}\gamma}-\frac{i}{2k}\gamma$ for $1\leq i\leq k-1$ as $V_{\mathbb{Z}\gamma}^{+}$-modules, thus if we fix $s$, the multiplicity space of the irreducible $V_{\mathbb{Z}\gamma}^{+}$-module $V_{\mathbb{Z}\gamma+\frac{i-2j}{2k}\gamma}$ is $U_{r}^{even,k,s}$ for $1\leq r\leq 2k+2$. The multiplicity space of $V_{\mathbb{Z}\gamma}^{+}$ is $S^{even,k,1}_{r}$ for $1\leq r\leq 2k+2$. The multiplicity space of $V_{\mathbb{Z}\gamma}^{-}$ is $S^{even,k,2}_{r}$ for $1\leq r\leq 2k+2$. The multiplicity space of $V_{\mathbb{Z}\gamma+\frac{1}{2}\gamma}^{+}$ is
\begin{eqnarray*}
S^{even,k,1'}_{r}=\bigoplus_{\tiny{\begin{split} i=0 \ \ \ \\ i\in 4\mathbb{Z} \ \ \end{split}}}^{k}(M^{k-i,\frac{k-i}{2}})^{+}\otimes V_{i+1,r}\oplus\bigoplus_{\tiny{\begin{split}i=0\ \ \  \\ i\in 4\mathbb{Z}+2 \end{split}}}^{k}(M^{k-i,\frac{k-i}{2}})^{-}\otimes V_{i+1,r}.\end{eqnarray*}
  for $1\leq r\leq 2k+2$.

 The multiplicity space of $V_{\mathbb{Z}\gamma+\frac{1}{2}\gamma}^{-}$ is  \begin{eqnarray*}S^{even,k,2'}_{r}=
\bigoplus_{\tiny{\begin{split} i=0 \ \ \ \\ i\in 4\mathbb{Z} \ \ \end{split}}}^{k}(M^{k-i,\frac{k-i}{2}})^{-}\otimes V_{i+1,r}\oplus\bigoplus_{\tiny{\begin{split}i=0\ \ \  \\ i\in 4\mathbb{Z}+2 \end{split}}}^{k}(M^{k-i,\frac{k-i}{2}})^{+}\otimes V_{i+1,r}.\end{eqnarray*}
 for $1\leq r\leq 2k+2$.
 Since $k=4n$, we see that $S^{even,k,1}_{r}=S^{even,k,1'}_{r}$ and $S^{even,k,2}_{r}=S^{even,k,2'}_{r}$ for $1\leq r\leq 2k+2$. We can obtain the same multiplicity space, i.e., irreducible modules of $K(\g,k)^{\sigma}$ from the decomposition of $M_{r}^{even,-}$. Similarly, we can obtain (\ref{untwistedmod7'.}) from the decomposition of $M_{r}^{odd,\pm}$. From \cite{CKLR}, \cite{CKL}, \cite{CKM}, we know that $U_{r}^{even,k,s}$, $U_{r}^{odd,k,s}$, $S^{even,k,1}_{r}$, $S^{even,k,2}_{r}$ for $1\leq r\leq 2k+2$ are irreducible modules of $K(\g,k)^{\sigma}$.

 For (\ref{untwistedmod10.}), Since $\overline{U^{odd,k}_{r}}$ are the multiplicity spaces of $V_{\mathbb{Z}\gamma}^{T_{2},+}$ from the decomposition of $M_{r}^{odd,+}$ for $1\leq r\leq 2k+2$. In this case, we note that the multiplicity spaces of $V_{\mathbb{Z}\gamma}^{T_{2},-}$ from the decomposition of $M_{r}^{odd,+}$ are \begin{eqnarray*}
 \overline{U^{odd,k,'}_{r}}=\bigoplus_{\tiny{\begin{split} i=0 \ \ \ \\ i\in 4\mathbb{Z}+1  \ \end{split}}}^{k}W(k,i)^{-}\otimes V_{i+1,r}\oplus\bigoplus_{\tiny{\begin{split}i=0\
  \ \  \\ i\in 4\mathbb{Z}+3 \end{split}}}^{k}W(k,i)^{+}\otimes V_{i+1,r}\end{eqnarray*}
  for $1\leq r\leq 2k+2$. Since $W(k,i)^{+}=W(k,k-i)^{+}$, $W(k,i)^{-}=W(k,k-i)^{-}$ as $K^{+}$-modules for $0\leq i\leq k$ and $V_{i+1,r}\cong V_{k-i+1,2k+3-r}$ as $L^{Vir}(c_{2k+3,k+2},0)$-modules, where $L^{Vir}(c_{2k+3,k+2},0)$ is the minimal Virasoro vertex operator algebra with central charge $c_{2k+3,k+2}=1-\frac{6(k+1)^{2}}{(2k+3)(k+2)}$. It implies that $\overline{U^{odd,k}_{r}}\cong\overline{U^{odd,k,'}_{r}}$ for $1\leq r \leq 2k+2$.

  Note that $k=4n$, (i) If $\frac{k}{2}\in 4\mathbb{Z}$, from (\ref{mod3.}) and Lemma \ref{twisted} (4), we have

\begin{eqnarray*}
\begin{split}\overline{M_{r}^{even,+}}&=\bigoplus_{i=0, \ i\in 4\mathbb{Z},\ i\neq \frac{k}{2}}^{k}\overline{L(k,i)}^{+}\otimes V_{i+1,r}\oplus\overline{L(k,\frac{k}{2})}^{+}\otimes V_{\frac{k}{2}+1,r}\oplus\bigoplus_{i=0, \ i\in 4\mathbb{Z}+2}^{k}\overline{L(k,i)}^{-}\otimes V_{i+1,r}\\
&=\bigoplus_{i=0, \ i\in 4\mathbb{Z}}^{k}\big(V_{\mathbb{Z}\gamma}^{T_{1},+}\otimes W(k,i)^{+}\oplus V_{\mathbb{Z}\gamma}^{T_{1},-}\otimes W(k,i)^{-}\big)\otimes V_{i+1,r}\\
&\oplus\big(V_{\mathbb{Z}\gamma}^{T_{1},+}\otimes (W(k,\frac{k}{2})^{+}\oplus\widetilde{W(k,\frac{k}{2})}^{-})\oplus V_{\mathbb{Z}\gamma}^{T_{1},-}\otimes (W(k,\frac{k}{2})^{-}\oplus\widetilde{W(k,\frac{k}{2})}^{+})\big)\otimes V_{\frac{k}{2}+1,r}\\
& \bigoplus_{i=0, \ i\in 4\mathbb{Z}+2}^{k}\big(V_{\mathbb{Z}\gamma}^{T_{1},-}\otimes W(k,i)^{+}\oplus V_{\mathbb{Z}\gamma}^{T_{1},+}\otimes W(k,i)^{-}\big)\otimes V_{i+1,r},
\end{split}
\end{eqnarray*}
for $1\leq r\leq 2k+2$, where in the second equality, we use the decomposition of $\overline{L(k,i)^{\pm}}$ as $V_{\mathbb{Z}\gamma}^{+}\otimes K^{+}$-modules. The multiplicity spaces of $V_{\mathbb{Z}\gamma}^{T_{1},+}$ are $\overline{U^{even,k,1}_{r}}$ for $1\leq r\leq 2k+2$. Since $W(k,i)^{+}=W(k,k-i)^{+}$, $W(k,i)^{-}=W(k,k-i)^{-}$ as $K^{+}$-modules for $0\leq i\leq k$ and $V_{i+1,r}\cong V_{k-i+1,2k+3-r}$ as $L^{Vir}(c_{2k+3,k+2},0)$-modules, it shows that $\overline{U^{even,k,1}_{r}}$ for $1\leq r\leq k$ are inequivalent modules among $\overline{U^{even,k,1}_{r}}$ for $1\leq r\leq 2k+2$. Similarly, the multiplicity spaces of $V_{\mathbb{Z}\gamma}^{T_{1},-}$ are $\overline{U^{even,k,2}_{r}}$ for $1\leq r\leq 2k+2$ and $\overline{U^{even,k,2}_{r}}$ for $1\leq r\leq k$ are inequivalent modules among $\overline{U^{even,k,2}_{r}}$ for $1\leq r\leq 2k+2$. We can obtain the same multiplicity space, i.e., modules of $K(\g,k)^{\sigma}$ from the decomposition of $\overline{M_{r}^{even,-}}$.  From \cite{CKLR}, \cite{CKL}, \cite{CKM}, we know that $\overline{U^{even,k,1}_{r}}$, $\overline{U^{even,k,2}_{r}}$ for $1\leq r\leq k$ are inequivalent irreducible modules of $K(\g,k)^{\sigma}$. The case (ii) for $\frac{k}{2}\in 4\mathbb{Z}+2$ is similar to prove as the case (i). And the case (3) for $k=4n+2$, $n\geq 1$  is similar to prove as the case (2).

\end{proof}

From the Theorem 5.4 of \cite{JW2} and the irreducible modules of $L^{Vir}(c_{2k+3,k+2},0)$ are self-dual. We obtain:

 \begin{prop} All irreducible modules of the orbifold vertex operator algebra $K(\g,k)^{\sigma}$ are self-dual.

 \end{prop}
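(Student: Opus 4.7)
The plan is to reduce the self-duality of the irreducible $K(\g,k)^{\sigma}$-modules to the self-duality of their building blocks, exploiting the explicit form of the classification in Theorem \ref{thm:irr para}. Every irreducible $K(\g,k)^{\sigma}$-module $M$ produced there is a finite direct sum of tensor products of the form $N\otimes V_{i+1,r}$, where $N$ ranges over irreducible $K^{+}=K(sl_2,k)^{\sigma}$-modules (either an untwisted module $M^{i,j}$, $(M^{i,i/2})^{\pm}$ or a twisted module $W(k,i)^{\pm},\widetilde{W(k,\tfrac{k}{2})}^{\pm}$) and $V_{i+1,r}$ is an irreducible module of the minimal Virasoro vertex operator algebra $L^{Vir}(c_{2k+3,k+2},0)$. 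Hence taking contragredients reduces to doing so on each summand.

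First, I would invoke Theorem 5.4 of \cite{JW2}, which asserts that every irreducible $K^{+}$-module is self-dual. Next, I would use the classical fact that every irreducible module $V_{r,s}$ of the rational, unitary minimal model $L^{Vir}(c_{2k+3,k+2},0)$ is self-dual (this follows for instance from the Frenkel--Zhu formula for the conformal weights and the symmetry $h^{r,s}_{p,q}=h^{q-r,p-s}_{p,q}$, together with the fact that each minimal model module has a unique weight labeling). Since the contragredient functor is compatible with tensor products of modules over tensor product vertex operator algebras, each summand $N\otimes V_{i+1,r}$ is self-dual as a $K^{+}\otimes L^{Vir}(c_{2k+3,k+2},0)$-module, and a fortiori as a $K(\g,k)^{\sigma}$-module.

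To conclude, I would observe that the contragredient functor commutes with direct sums, so $M^{\vee}$ decomposes with exactly the same pattern of $K^{+}\otimes L^{Vir}(c_{2k+3,k+2},0)$-isotypic components as $M$. Because each such component appears in at most one of the irreducible $K(\g,k)^{\sigma}$-modules listed in Theorem \ref{thm:irr para} (indeed, this inequivalence was used to distinguish them in the proof of that theorem), the isomorphism class of $M^{\vee}$ is forced to equal that of $M$. This reasoning applies uniformly to both the untwisted modules ($U^{\mathrm{even},k,s}_r$, $U^{\mathrm{odd},k,s}_r$, $S^{\mathrm{even},k,\ell}_r$, $S^{\mathrm{odd},k,\ell}_r$) and the twisted-type modules ($\overline{U^{\mathrm{even},k,\ell}_r}$, $\overline{U^{\mathrm{odd},k,\ell}_r}$), since $\sigma$ has order two on $K(\g,k)$ so that the contragredient of a $\sigma$-twisted module is again $\sigma$-twisted.

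The main obstacle I anticipate is essentially bookkeeping: one has to verify that the self-duality isomorphism on each summand $N\otimes V_{i+1,r}$ assembles into a genuine $K(\g,k)^{\sigma}$-module isomorphism, i.e., that the actions of generators of $K(\g,k)^{\sigma}$ that mix different summands are intertwined by the sum of the individual self-duality maps. However, this is guaranteed automatically by Schur's lemma together with the irreducibility of $M$: any nonzero intertwiner between $M^{\vee}$ and $M$ produced by matching isotypic components must be an isomorphism. Thus the argument avoids any new calculation and follows the strategy of the affine case in \cite{JW2} verbatim.
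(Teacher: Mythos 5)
Your proposal is correct and follows essentially the same route as the paper, whose proof consists precisely of citing Theorem 5.4 of \cite{JW2} (self-duality of the irreducible $K^{+}$-modules) together with the self-duality of the irreducible $L^{Vir}(c_{2k+3,k+2},0)$-modules, and then reading off self-duality from the decompositions in Theorem \ref{thm:irr para}. The only blemish is the parenthetical description of $L^{Vir}(c_{2k+3,k+2},0)$ as unitary (it is a non-unitary minimal model in general), but self-duality of its irreducible modules holds regardless, so the argument is unaffected.
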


\end{document}